\documentclass[12pt, reqno]{amsart}
\usepackage{amsmath, amsthm, amscd, amsfonts, amssymb, graphicx, color, float,mathrsfs}
\usepackage[bookmarksnumbered, colorlinks, plainpages]{hyperref}
\hypersetup{colorlinks=true,linkcolor=red, anchorcolor=green, citecolor=cyan, urlcolor=red, filecolor=magenta, pdftoolbar=true}

\textheight 22.5truecm \textwidth 15truecm
\setlength{\oddsidemargin}{0.25in}\setlength{\evensidemargin}{0.25in}

\setlength{\topmargin}{-.5cm}

\theoremstyle{plain}
\newtheorem{theorem}{Theorem}[section]

\newtheorem{lemma}[theorem]{Lemma}
\newtheorem{proposition}[theorem]{Proposition}

\theoremstyle{definition}
\newtheorem{definition}{Definition}[section]

\theoremstyle{remark}
\newtheorem{remark}{Remark}[section]
\newtheorem{example}{Example}[section]

\numberwithin{equation}{section}

\numberwithin{figure}{section} \setlength{\paperwidth}{210mm}
\setlength{\paperheight}{297mm} \setlength{\oddsidemargin}{0mm}
\setlength{\evensidemargin}{0mm} \setlength{\topmargin}{-20mm}
\setlength{\headheight}{10mm} \setlength{\headsep}{13mm}
\setlength{\textwidth}{160mm} \setlength{\marginparsep}{0mm}

\begin{document}
\title[Weighted H-H Inequality in Multiple Variables]
{On the Weighted Hermite-Hadamard Inequality in Multiple Variables, Application for Weighted Multivariate Means}

\author[M. Ra\"{\i}ssouli, L. Tarik, M. Chergui]{Mustapha Ra\"{\i}ssouli$^{1}$, Lahcen Tarik$^2$ and Mohamed Chergui$^{3}$}
\address{$^{1}$ Department of Mathematics, Science Faculty, Moulay Ismail University, Meknes, Morocco.}
\address{$^{2}$ LAGA-Laboratory, Science Faculty,  Ibn Tofail University, Kenitra, Morocco.}
\address{$^{3}$ Department of Mathematics, CRMEF-RSK, EREAM Team, LaREAMI-Lab, Kenitra, Morocco.}

\email{\textcolor[rgb]{0.00,0.00,0.84}{raissouli.mustapha@gmail.com}}
\email{\textcolor[rgb]{0.00,0.00,0.84}{lahcen.tarik@uit.ac.ma}}
\email{\textcolor[rgb]{0.00,0.00,0.84}{chergui\_m@yahoo.fr}}

\subjclass[2010]{26D15, 26B25, 26D99}

\keywords{Hermite-Hadamard inequality in multiple variables, weighted Hermite-Hadamard inequality, weighted multivariate means.}

\date{Received: xxxxxx; Revised: yyyyyy; Accepted: zzzzzz.}

\begin{abstract}
Recently, the so-called Hermite-Hadamard inequality for (operator) convex functions with one variable has known extensive several developments by virtue of its nice properties and various applications. The fundamental target of this paper is to investigate a weighted variant of Hermite-Hadamard inequality in multiple variables that extends the univariate case. As an application, we introduce some weighted multivariate means extending certain bivariate means known in the literature.
\end{abstract}

\maketitle

\section{\bf Introduction}

Let $C$ be a nonempty interval of ${\mathbb R}$. A function $f:C\rightarrow{\mathbb R}$ is called convex if the inequality
\begin{equation}\label{0}
f\big((1-\lambda)a+\lambda b\big)\leq(1-\lambda)f(a)+\lambda f(b)
\end{equation}
holds for any $a,b\in C$ and $\lambda\in[0,1]$. We say that $f$ is concave if \eqref{0} is reversed.

It is well known that $f:C\rightarrow{\mathbb R}$ is convex (resp. concave) if and only if the following inequality
\begin{equation}\label{DJI}
f\Big(\sum_{i=1}^{n+1}\lambda_ia_i\Big)\leq (\geq) \sum_{i=1}^{n+1}\lambda_if(a_i)
\end{equation}
holds true for any $a_1,a_2,...,a_{n+1}\in C$ and $\lambda_1,\lambda_2,...,\lambda_{n+1}\geq 0$ such that $\sum_{i=1}^{n+1}\lambda_i=1$.
\eqref{DJI} is referred to as the discrete Jensen inequality in the literature. An integral version of Jensen inequality reads as follows.
Let $\Omega$ be a $\mu$-measurable set such that $mes(\Omega)>0$. Let $\phi\in
L^1(\Omega)$ be such that $\phi(x)\in C$ almost everywhere and
$f\circ\phi\in L^1(\Omega)$. If $f:C\rightarrow{\mathbb R}$ is convex then
\begin{equation}\label{IJI}
f\left(\displaystyle\frac{1}{mes(\Omega)}\int_{\Omega}\phi(x)d\mu(x)\right)\leq
\frac{1}{mes(\Omega)}\int_{\Omega}f\big(\phi(x)\big)d\mu(x).
\end{equation}
If $f:C\rightarrow{\mathbb R}$ is concave, the inequalities in \eqref{IJI} are reversed.

As an application of \eqref{0} and \eqref{IJI}, we can easily deduce the following double inequality
\begin{equation}\label{1}
f\Big(\dfrac{a+b}{2}\Big)\leq\int_0^1f\Big((1-t)a+tb\Big)dt\leq\dfrac{f(a)+f(b)}{2},
\end{equation}
whenever $a,b\in C$ and $f:C\rightarrow{\mathbb R}$ is convex. If $f$ is concave, the inequalities in \eqref{1} are reversed. Inequalities \eqref{1}, known in the literature as the Hermite-Hadamard inequality $(HHI)$, are useful in mathematical analysis and contributes as a good tool for determining interesting estimations and approximations. An enormous amount of efforts has been devoted in the literature for extending \eqref{1} from the case where the variables are real numbers to the case where the variables are bounded linear operators. For more extensions as well as some refinements, reverses and applications of \eqref{1}, we refer the interested reader to \cite{BT,D2,D3,D4,M,RAI1,RAI2} and the related references cited therein.

$(HHI)$ have been extended from the uni-dimensional case to the multidimensional case, as explained in what follows, \cite{BES,NIC}. Let $D_n$ be a $(n+1)$-simplex of ${\mathbb R}^n$ and $f:D_n\longrightarrow{\mathbb R}$ be a convex function. If $p_1,p_2,...,p_{n+1}$ denote the vertices of $D_n$, then we have
\begin{equation}\label{13}
f\Big(\sum_{i=1}^{n+1}\frac{p_i}{n+1}\Big)\leq\frac{1}{|D_n|}
\int_{D_n}f(x)dx\leq\frac{1}{n+1}\sum_{i=1}^{n+1}f(p_i),
\end{equation}
where $x:=(x_1,x_2,...,x_n)\in D_n$, $dx:=dx_1dx_2...dx_n$ and $|D_n|:=\int_{D_n}dx$ stands for the Lebesgue volume of $D_n$ in ${\mathbb R}^n$.

Some refinements of \eqref{13} can be found in \cite{MSP,NW,NW1,NW2,PA,RAD} and for a reverse version of \eqref{13} one can consult \cite{MS}. An extension of \eqref{13}, due to Choquet, for convex functions on a compact set was investigated in \cite{PH}. Let $E_n$ be the standard simplex of ${\mathbb R}^n$ i.e.,
$$E_n:=\Big\{(t_1,t_2,...,t_n)\in{\mathbb R}^n:\; t_i\geq0,\; i=1,2,...,n,\; \sum_{i=1}^nt_i\leq1\Big\}.$$
Then $|E_n|=1/n!$ the Lebesgue volume of $E_n$. From \eqref{13} we infer that
\begin{equation}\label{14}
f\Big(\sum_{i=1}^{n+1}\frac{a_i}{n+1}\Big)\leq
n!\int_{E_n}f\Big(\sum_{i=1}^{n+1}t_ia_i\Big)dt\leq\frac{1}{n+1}\sum_{i=1}^{n+1}f(a_i),
\end{equation}
provided that $f$ is convex on a convex set containing $a_1,a_2,...,a_{n+1}$. Here and throughout the following, we set $dt:=dt_1dt_2...dt_n$ and $t_{n+1}:=1-\sum_{i=1}^nt_i$. If $f$ is concave then the inequalities in \eqref{13} and in \eqref{14} are reversed.

The remainder of this paper will be organized as follows: Section 2 deals with some basic notions and results that will be needed throughout the following. In Section 3, we investigate in two versions a weighted Hermite-Hadamard inequality which is an extension of \eqref{14}. Section 4 deals with the application of our previous theoretical results to the introduction of certain weighted means in multiple variables.

\section{\bf Some needed tools and results}

To provide our main results, we need more basic notions. Let $x>0$ and $k>0$ be given. The notations $\Gamma$ and $(x)_k$ refer respectively to the special gamma function  and the Pochhammer symbol defined by
$$\Gamma(x):=\int_0^{\infty}t^{x-1}e^{-t}dt,\;\; (x)_k:=\frac{\Gamma(x+k)}{\Gamma(x)}.$$

Recall that we have $\Gamma(x+1)=x\Gamma(x)$ for all $x>0$, with $\Gamma(1)=1$, and so $\Gamma(n+1)=n!$ for any integer $n\geq1$. The following lemma will be needed later.

\begin{lemma}
Let $k>0$ be an integer and $c>0$ be a real number. The equation in $x>0$: $(x)_k=c$ has one and only one root which will be denoted by $x=[c]_{-k}$.
\end{lemma}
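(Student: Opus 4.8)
The plan is to reduce the Pochhammer symbol to an explicit finite product and then show that the resulting function of $x$ is a continuous, strictly increasing bijection of $(0,\infty)$ onto itself. First I would record that, since $k$ is a positive integer, repeated application of the functional equation $\Gamma(x+1)=x\Gamma(x)$ yields
$$(x)_k=\frac{\Gamma(x+k)}{\Gamma(x)}=x(x+1)(x+2)\cdots(x+k-1)=\prod_{j=0}^{k-1}(x+j).$$
Thus we may set $g(x):=\prod_{j=0}^{k-1}(x+j)$ and study the equation $g(x)=c$ for $x>0$; the point of this step is that $g$ is now a polynomial, which makes the subsequent monotonicity and limit computations transparent.

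Next I would verify that $g$ is continuous and strictly increasing on $(0,\infty)$. Continuity is immediate since $g$ is polynomial. For strict monotonicity, each factor $x+j$ is positive and strictly increasing on $(0,\infty)$, and a finite product of positive strictly increasing functions is again strictly increasing; this can also be read off from
$$g'(x)=\sum_{i=0}^{k-1}\ \prod_{\substack{0\le j\le k-1\\ j\ne i}}(x+j)>0\qquad\text{for }x>0.$$
Hence $g$ is a strictly increasing continuous function on $(0,\infty)$.

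Then I would examine the boundary behaviour: as $x\to 0^{+}$ the factor $x$ tends to $0$ while the remaining factors stay bounded, so $g(x)\to 0$; and as $x\to+\infty$ every factor tends to $+\infty$, so $g(x)\to+\infty$. Combining continuity, strict monotonicity, and these two limits, the intermediate value theorem shows that $g$ maps $(0,\infty)$ bijectively onto $(0,\infty)$. Consequently, for every $c>0$ there exists exactly one $x>0$ with $g(x)=c$; denoting this unique root by $[c]_{-k}$ completes the argument.

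I expect no serious obstacle in this proof. The only point demanding a moment of care is the passage from the Gamma-quotient definition of $(x)_k$ to the finite-product form, since it is precisely this identity (valid only because $k$ is an integer) that turns the problem into an elementary statement about a monotone polynomial.
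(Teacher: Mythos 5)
Your proof is correct and follows essentially the same route as the paper's: both write $(x)_k$ as the polynomial $x(x+1)\cdots(x+k-1)$, obtain existence from the limits $0$ at $x\to 0^{+}$ and $+\infty$ at $x\to+\infty$ together with the intermediate value theorem, and obtain uniqueness from strict monotonicity. You merely spell out the details the paper calls ``obvious'' (the derivative computation and the reduction from the Gamma-quotient to the finite product), which is a welcome but not substantively different elaboration.
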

\begin{proof}
For fixed $k,c>0$, we put $p(x)=(x)_k-c=x(x+1)...(x+k-1)-c$. It is clear that $p(0):=\lim_{x\downarrow0}p(x)=-c<0$ and $p(+\infty)=+\infty$. By the standard mean value theorem in Real Analysis there is some $r>0$ such that $p(r)=0$. It is obvious that $x\mapsto p(x)$ is strictly increasing and therefore the root $r$ is unique, so completing the proof.
\end{proof}

Some basic properties of $x\mapsto[x]_{-k}$, which will be needed in the sequel, are embodied in the following result.

\begin{proposition}\label{pr1}
Let $x>0$ be a real number and $k>0$ be an integer.We have the following assertions:\\
(i) $\big([x]_{-k}\big)_k=x$ and $\big[(x)_k\big]_{-k}=x$.\\
(ii) $[x]_{-1}=x$.\\
(iii) $[k!]_{-k}=1$.
\end{proposition}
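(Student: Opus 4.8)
The plan is to exploit the fact that, by the preceding lemma, the map $x\mapsto(x)_k$ is a strictly increasing bijection from $(0,\infty)$ onto $(0,\infty)$ whose inverse is precisely $c\mapsto[c]_{-k}$. All three assertions will then follow immediately from this inverse relationship, so the whole proof reduces to unwinding the definition of $[\cdot]_{-k}$ and invoking the uniqueness of the root guaranteed by the lemma.

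First, for (i) I would treat the two identities separately but symmetrically. For $\big([x]_{-k}\big)_k=x$: writing $y:=[x]_{-k}$, the definition of $[\cdot]_{-k}$ says exactly that $y$ is the root of the equation $(y)_k=x$, so $(y)_k=x$ holds \emph{by definition}. For $\big[(x)_k\big]_{-k}=x$: setting $c:=(x)_k$, the value $[c]_{-k}$ is defined as the \emph{unique} positive solution of $(y)_k=c$; since $x$ is manifestly one such solution, uniqueness (Lemma 2.1) forces $[c]_{-k}=x$.

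Next, for (ii) and (iii) I would simply evaluate the Pochhammer symbol at the relevant arguments and again appeal to uniqueness. For (ii), when $k=1$ one has $(y)_1=\Gamma(y+1)/\Gamma(y)=y$, so the equation $(y)_1=x$ reads $y=x$, whence $[x]_{-1}=x$. For (iii), I would observe that $(1)_k=1\cdot2\cdots k=k!$ (equivalently $\Gamma(1+k)/\Gamma(1)=k!$), so $y=1$ solves $(y)_k=k!$, and uniqueness once more yields $[k!]_{-k}=1$.

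There is no genuine obstacle here: every step rests on the definition of $[\cdot]_{-k}$ together with the existence-and-uniqueness statement already proved in Lemma 2.1. The only point requiring mild care is to keep straight, within (i), which of the two compositions holds \emph{by definition} and which instead needs the uniqueness argument; the remaining computations in (ii) and (iii) are routine evaluations of the Pochhammer symbol.
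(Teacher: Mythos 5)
Your proof is correct. The paper itself supplies no argument for this proposition (its proof reads only ``It is straightforward and we omit the details here''), and what you have written is exactly the routine verification the authors intended: each identity follows from the definition of $[\,\cdot\,]_{-k}$ as the unique positive root of $(y)_k=c$ together with the uniqueness guaranteed by the preceding lemma, combined with the evaluations $(y)_1=y$ and $(1)_k=k!$.
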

\begin{proof}
It is straightforward and we omit the details here.
\end{proof}

 We also need to recall the following. The beta function in multiple variables is defined for $x_1,x_2,...,x_n,x_{n+1}>0$ by, see \cite{ALZ,RMSN} for instance,
\begin{equation}\label{32}
B_{n+1}\big(x_1,x_2,...,x_n,x_{n+1}\big):=\int_{E_{n}}\prod_{i=1}^{n+1}t_i^{x_i-1}dt_1dt_2...dt_{n},
\end{equation}
where, as previous, we set $t_{n+1}=1-\sum_{i=1}^{n}t_i$. For $n=1$, \eqref{32} defines exactly the standard beta function in two variables.

We have the following formula which states a relationship between this beta function in multiple variables and the standard gamma function,

\begin{equation}\label{33}
B_{n+1}\big(x_1,x_2,...,x_n,x_{n+1}\big)=\frac{\prod_{i=1}^{n+1}\Gamma(x_i)}{\Gamma\big(\sum_{i=1}^{n+1}x_i\big)},
\end{equation}
and so $B_{n+1}\big(x_1,x_2,...,x_n,x_{n+1}\big)$ is symmetric in $x_1,x_2,...,x_{n+1}$.

We also denote by $int(E_n)$ the topological interior of $E_n$ in ${\mathbb R}^n$, namely
$$int(E_n):=\Big\{(t_1,t_2,...,t_n)\in{\mathbb R}^n:\; t_i>0,\; i=1,2,...,n,\; \sum_{i=1}^nt_i<1\Big\}.$$

We have the following lemma.

\begin{lemma}\label{lem1}
For $\lambda_{n+1}:=1-\sum_{i=1}^n\lambda_i$ and ${\bf\lambda}:=(\lambda_1,\lambda_2,...,\lambda_n)\in int(E_n)$ there holds
$$\displaystyle\int_{E_n}\sum_{i=1}^{n+1}\lambda_it_i^{\lambda_{i,n}}\;dt_1dt_2...dt_n=\frac{1}{n!},$$
where, for $i=1,2,...,n,n+1$, we set
\begin{equation}
\lambda_{i,n}:=\Big[\frac{n!\lambda_i}{\lambda_{i+1}}\Big]_{-n}-1 \;\; \mbox{and}\;\; \lambda_{n+2}:=\lambda_1.
\end{equation}
\end{lemma}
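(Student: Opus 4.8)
The plan is to exploit linearity of the integral and reduce each summand to a value of the multivariate beta function \eqref{32}. Writing $\int_{E_n}\sum_{i=1}^{n+1}\lambda_i t_i^{\lambda_{i,n}}\,dt=\sum_{i=1}^{n+1}\lambda_i\int_{E_n}t_i^{\lambda_{i,n}}\,dt$, I would observe that the $i$-th integral is precisely $B_{n+1}$ evaluated at the point whose $i$-th argument equals $\lambda_{i,n}+1$ and whose remaining $n$ arguments all equal $1$; indeed, setting $x_i-1=\lambda_{i,n}$ and $x_j-1=0$ for $j\neq i$ in \eqref{32} reproduces exactly the integrand $t_i^{\lambda_{i,n}}$ (recall $t_{n+1}=1-\sum_{i=1}^n t_i$, so the case $i=n+1$ is covered as well). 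Note that $\lambda\in int(E_n)$ guarantees $\lambda_i>0$ for all $i$, hence each ratio $n!\lambda_i/\lambda_{i+1}$ is a positive real and the inverse Pochhammer symbol $[\cdot]_{-n}$ is well defined by the preceding lemma on the root of $(x)_k=c$; moreover $\lambda_{i,n}+1=[n!\lambda_i/\lambda_{i+1}]_{-n}>0$, so $\lambda_{i,n}>-1$ and each integral converges.

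Next I would apply the gamma-function formula \eqref{33} to this special value. Since $\Gamma(1)=1$, the $n$ unit arguments contribute nothing to the numerator, while the arguments sum to $(\lambda_{i,n}+1)+n$ in the denominator; thus $\int_{E_n}t_i^{\lambda_{i,n}}\,dt=\Gamma(\lambda_{i,n}+1)/\Gamma(\lambda_{i,n}+1+n)$. By the very definition of the Pochhammer symbol $(x)_k=\Gamma(x+k)/\Gamma(x)$, this ratio equals $1/(\lambda_{i,n}+1)_n$.

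The key step — and the reason the exponents $\lambda_{i,n}$ are defined as they are — is now to invoke Proposition \ref{pr1}(i): since $\lambda_{i,n}+1=[n!\lambda_i/\lambda_{i+1}]_{-n}$, we get $(\lambda_{i,n}+1)_n=\big([n!\lambda_i/\lambda_{i+1}]_{-n}\big)_n=n!\lambda_i/\lambda_{i+1}$. Hence $\int_{E_n}t_i^{\lambda_{i,n}}\,dt=\lambda_{i+1}/(n!\lambda_i)$, and multiplying by the weight $\lambda_i$ cancels it to leave the clean expression $\lambda_i\int_{E_n}t_i^{\lambda_{i,n}}\,dt=\lambda_{i+1}/n!$.

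Finally I would sum over $i$: with the convention $\lambda_{n+2}=\lambda_1$, the index $i+1$ runs cyclically over $2,3,\dots,n+1,1$, so that $\sum_{i=1}^{n+1}\lambda_{i+1}=\sum_{i=1}^{n+1}\lambda_i=\sum_{i=1}^n\lambda_i+\lambda_{n+1}=1$ by the definition $\lambda_{n+1}=1-\sum_{i=1}^n\lambda_i$. Therefore the integral equals $\tfrac{1}{n!}\sum_{i=1}^{n+1}\lambda_{i+1}=\tfrac{1}{n!}$, as claimed. I do not anticipate a genuine obstacle: the whole argument is a short chain of identifications, and the only subtlety is bookkeeping — matching the single nontrivial exponent to the correct beta argument and tracking the cyclic index shift so that the sum collapses to the total mass $1$.
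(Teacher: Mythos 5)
Your proposal is correct and follows essentially the same route as the paper's proof: reduce each term to $B_{n+1}(\lambda_{i,n}+1,1,\dots,1)$ via \eqref{32}, convert to $1/(\lambda_{i,n}+1)_n$ via \eqref{33}, invert the Pochhammer symbol using Proposition \ref{pr1}(i) (the paper phrases this as the implication \eqref{IMP}), and sum cyclically using $\sum_{i=1}^{n+1}\lambda_{i+1}=1$. Your added remarks on well-definedness of $[\cdot]_{-n}$ and convergence of the integrals (from $\lambda_{i,n}>-1$) are careful touches the paper omits, but they do not change the argument.
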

\begin{proof}
By \eqref{32} and the fact that $B_{n+1}\big(x_1,x_2,...,x_n,x_{n+1}\big)$ is symmetric in $x_1,x_2,...,x_{n+1}$ we have, by setting $dt:=dt_1dt_2...dt_n$,
$$\displaystyle\int_{E_n}t_i^{\lambda_{i,n}}\;dt=B_{n+1}\big(\lambda_{i,n}+1,1,1,...,1\big).$$
This, with \eqref{33} and the definition of $(x)_k$, yields
$$\displaystyle\int_{E_n}t_i^{\lambda_{i,n}}\;dt=\frac{\Gamma\big(\lambda_{i,n}+1\big)}{\Gamma\big(\lambda_{i,n}+n+1\big)}=\frac{1}{\big(\lambda_{i,n}+1\big)_n}.$$
Using the implication
\begin{equation}\label{IMP}
\lambda_{i,n}:=\Big[\frac{n!\lambda_i}{\lambda_{i+1}}\Big]_{-n}-1\;\Longrightarrow\;\big(\lambda_{i,n}+1\big)_n=\frac{n!\lambda_i}{\lambda_{i+1}}
\end{equation}
we then get
$$\displaystyle\int_{E_n}t_i^{\lambda_{i,n}}\;dt=\frac{\lambda_{i+1}}{n!\lambda_i}.$$
Multiplying this latter equality by $\lambda_i$ for $i=1,2,...,n+1$ and summing from $i=1$ to $i=n+1$, we obtain
$$\sum_{i=1}^{n+1}\lambda_i\displaystyle\int_{E_n}t_i^{\lambda_{i,n}}\;dt=\sum_{i=1}^{n+1}\frac{\lambda_{i+1}}{n!}=\frac{1}{n!},$$
since $\sum_{i=1}^{n+1}\lambda_i=1$ and $\lambda_{n+2}=\lambda_1$. The proof is finished.
\end{proof}

The following lemma will be also needed.

\begin{lemma}\label{lem2}
Let $i,j=1,2,...,n+1$. Then there holds
$$\int_{E_n}t_jt_i^{\lambda_{i,n}}dt=
\left\{
\begin{tabular}{lll}
$\dfrac{\lambda_{i+1}}{n!\lambda_i}\dfrac{\lambda_{i,n}+1}{\lambda_{i,n}+n+1}\;\;\;\; \mbox{if}\;\; j=i$\\\\
$\dfrac{\lambda_{i+1}}{n!\lambda_i}\dfrac{1}{\lambda_{i,n}+n+1}\;\;\;\; \mbox{if}\;\; j\neq i,$
\end{tabular}
\right.$$
where, as above, $dt:=dt_1dt_2...dt_n$ and $\lambda_{n+2}=\lambda_1$.
\end{lemma}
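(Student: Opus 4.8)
The plan is to recognize each of the two integrals as a special evaluation of the multivariate beta function \eqref{32} and then to invoke the gamma-function formula \eqref{33}, exactly as was done in the proof of Lemma \ref{lem1}. The two cases differ only in the way the extra factor $t_j$ is absorbed into the exponents appearing in the integrand.

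First, for the diagonal case $j=i$, I would write $t_j t_i^{\lambda_{i,n}}=t_i^{\lambda_{i,n}+1}$ and observe that this is the integrand of $B_{n+1}$ having the exponent $\lambda_{i,n}+2$ in the $i$-th slot and $1$ in every other slot; by the symmetry of $B_{n+1}$ the choice of slot is immaterial. Thus
$$\int_{E_n} t_i^{\lambda_{i,n}+1}\,dt = B_{n+1}(\lambda_{i,n}+2,1,\dots,1) = \frac{\Gamma(\lambda_{i,n}+2)}{\Gamma(\lambda_{i,n}+n+2)} = \frac{1}{(\lambda_{i,n}+2)_n}.$$
For the off-diagonal case $j\neq i$, I would treat $t_j=t_j^{2-1}$ and $t_i^{\lambda_{i,n}}=t_i^{(\lambda_{i,n}+1)-1}$, so that the integral is the value of $B_{n+1}$ with $2$ in the $j$-th slot, $\lambda_{i,n}+1$ in the $i$-th slot, and $1$ elsewhere. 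Applying \eqref{33} gives
$$\int_{E_n} t_j t_i^{\lambda_{i,n}}\,dt = \frac{\Gamma(\lambda_{i,n}+1)\,\Gamma(2)}{\Gamma(\lambda_{i,n}+n+2)} = \frac{1}{(\lambda_{i,n}+1)_{n+1}}.$$

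Second, I would convert these closed forms into the stated expressions using the key identity $(\lambda_{i,n}+1)_n=n!\lambda_i/\lambda_{i+1}$, which is precisely \eqref{IMP} established in Lemma \ref{lem1}. The only remaining work is an elementary factorization of Pochhammer symbols: for $j=i$ one writes $(\lambda_{i,n}+2)_n=(\lambda_{i,n}+1)_n\,(\lambda_{i,n}+n+1)/(\lambda_{i,n}+1)$, whence
$$\frac{1}{(\lambda_{i,n}+2)_n} = \frac{1}{(\lambda_{i,n}+1)_n}\cdot \frac{\lambda_{i,n}+1}{\lambda_{i,n}+n+1} = \frac{\lambda_{i+1}}{n!\lambda_i}\cdot \frac{\lambda_{i,n}+1}{\lambda_{i,n}+n+1};$$
and for $j\neq i$ one uses $(\lambda_{i,n}+1)_{n+1}=(\lambda_{i,n}+1)_n\,(\lambda_{i,n}+n+1)$ to obtain $\tfrac{\lambda_{i+1}}{n!\lambda_i}\cdot\tfrac{1}{\lambda_{i,n}+n+1}$. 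Both outcomes match the values claimed in the statement.

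The computation is essentially routine once the beta-function identification is made; the points demanding care are the bookkeeping of the two Pochhammer factorizations and the verification that the telescoping of the gamma ratios is carried out correctly, together with confirming that the symmetry of $B_{n+1}$ legitimately covers the boundary indices $i=n+1$ or $j=n+1$, for which the relevant variable is the dependent coordinate $t_{n+1}=1-\sum_{k=1}^n t_k$ rather than one of the free integration variables. I expect this last symmetry check to be the only genuinely delicate step.
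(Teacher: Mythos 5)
Your proposal is correct and follows essentially the same route as the paper's proof: identify each integral as a value of $B_{n+1}$ (using its symmetry, which via \eqref{33} also covers the slots corresponding to the dependent coordinate $t_{n+1}$), apply \eqref{33}, and finish with the key identity \eqref{IMP}. The only cosmetic difference is that you carry out the final reduction by factoring Pochhammer symbols, whereas the paper telescopes the gamma ratios directly with $\Gamma(x+1)=x\,\Gamma(x)$ --- the same algebra in different notation.
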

\begin{proof}
Assume that $j=i$. By \eqref{32} and the fact that $B_{n+1}\big(x_1,x_2,...,x_n,x_{n+1}\big)$ is symmetric in $x_1,x_2,...,x_{n+1}$, we get
$$\int_{E_n}t_jt_i^{\lambda_{i,n}}dt=\int_{E_n}t_i^{\lambda_{i,n}+1}dt=B_{n+1}\big(\lambda_{i,n}+2,1,1,...,1\big),$$
which, with \eqref{33} and the relationship $\Gamma(x+1)=x\Gamma(x)$, yields
\begin{multline*}
\int_{E_n}t_jt_i^{\lambda_{i,n}}dt=\frac{\Gamma(\lambda_{i,n}+2)}{\Gamma(\lambda_{i,n}+n+2)}
=\frac{(\lambda_{i,n}+1)\;\Gamma(\lambda_{i,n}+1)}{(\lambda_{i,n}+n+1)\;\Gamma(\lambda_{i,n}+n+1)}\\
=\frac{\lambda_{i,n}+1}{\lambda_{i,n}+n+1}\frac{1}{(\lambda_{i,n}+1)_n}.
\end{multline*}
Thanks to \eqref{IMP}, we get the desired result when $j=i$. Now, assume that $j\neq i$. By the same arguments as previous, and using again the fact that $B_{n+1}$ is symmetric in its variables and $\Gamma(2)=1$, we obtain
\begin{multline*}
\int_{E_n}t_jt_i^{\lambda_{i,n}}dt=B_{n+1}\big(\lambda_{i,n}+1,2,1,...,1\big)
=\frac{\Gamma(\lambda_{i,n}+1)}{\Gamma(\lambda_{i,n}+n+2)}\\
= \frac{\Gamma(\lambda_{i,n}+1)}{(\lambda_{i,n}+n+1)\;\Gamma(\lambda_{i,n}+n+1)}=\frac{1}{\lambda_{i,n}+n+1}\frac{1}{(\lambda_{i,n}+1)_n},
\end{multline*}
which with \eqref{IMP} again implies the desired result, so completing the proof.
\end{proof}

\section{\bf Weighted Hermite-Hadamard inequalities}

We preserve the same notations as previous. For ${\bf\lambda}:=(\lambda_1,\lambda_2,...,\lambda_n)\in E_n$ and $a:=(a_1,a_2,...,a_n,a_{n+1})\in{\mathbb R}^{n+1}$ we set
$$\nabla_\lambda a:=\sum_{i=1}^{n+1}\lambda_ia_i,\; \mbox{with}\; \lambda_{n+1}:=1-\sum_{i=1}^n\lambda_i.$$
With this, if $f:C\rightarrow{\mathbb R}$ is convex then \eqref{DJI} can be shortly written as follows
\begin{equation}\label{S}
f\Big(\nabla_\lambda a\Big)\leq\nabla_\lambda f(a),
\end{equation}
where $a:=(a_1,a_2,...,a_n,a_{n+1})\in C^{n+1}$ and $f(a):=\big(f(a_1),f(a_2),...,f(a_{n+1})\big)$.

As already pointed out before, we will present in this section a weighted Hermite-Hadamard inequalities that extend \eqref{14}. Two versions of these inequalities will be displayed here. The first version reads as follows.

\begin{theorem}\label{thA}
Let $C$ be a nonempty convex set of ${\mathbb R}^n$ and $f:C\rightarrow{\mathbb R}$ be convex. If ${\bf\lambda}:=(\lambda_1,\lambda_2,...,\lambda_n)\in int(E_n)$ then for any $a:=(a_1,a_2,...,a_n,a_{n+1})\in C^{n+1}$ there holds
\begin{equation}\label{50}
f\Big(\sum_{i=1}^{n+1}\tilde{\lambda}(i,n)a_i\Big)\leq\displaystyle\int_{E_n}f\left(\sum_{i=1}^{n+1}t_ia_i\right)d\nu_\lambda(t)
\leq\sum_{i=1}^{n+1}\tilde{\lambda}(i,n)f(a_i),
\end{equation}
where $t_{n+1}:=1-\sum_{i=1}^nt_i$, $dt:=dt_1dt_2...dt_n$, $\lambda_{n+1}:=1-\sum_{i=1}^n\lambda_i$, $\nu_{\bf \lambda}(t)$ is the probability measure on  $E_n$ defined through
\begin{equation}\label{nu}
d\nu_{\lambda}(t):=n!\sum_{i=1}^{n+1}\lambda_it_i^{\lambda_{i,n}}\;dt,\;\; \lambda_{i,n}:=\Big[\frac{n!\lambda_i}{\lambda_{i+1}}\Big]_{-n}-1, \;\; \mbox{and}\;\; \lambda_{n+2}:=\lambda_1,
\end{equation}
and $\tilde{\lambda}(i,n)$ is defined by
\begin{equation}\label{tilde}
\tilde{\lambda}(i,n):=\sum_{j=1,j\neq i}^{n+1}\frac{\lambda_{j+1}}{\lambda_{j,n}+n+1}+\lambda_{i+1}\frac{\lambda_{i,n}+1}{\lambda_{i,n}+n+1}.
\end{equation}
If $f$ is concave, the inequalities in \eqref{50} are reversed.
\end{theorem}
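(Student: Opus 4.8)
The plan is to deduce both bounds in \eqref{50} from Jensen's inequality, once the measure-theoretic preliminaries and a single moment computation are settled. First I would confirm that $\nu_\lambda$ is genuinely a probability measure on $E_n$. Since $\lambda\in int(E_n)$ every $\lambda_i>0$, so the density $n!\sum_{i=1}^{n+1}\lambda_it_i^{\lambda_{i,n}}$ is nonnegative on $E_n$, and Lemma \ref{lem1} gives $\int_{E_n}\sum_{i=1}^{n+1}\lambda_it_i^{\lambda_{i,n}}\,dt=1/n!$, whence $\nu_\lambda(E_n)=1$.

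The crucial step is computing the first moment $m_i:=\int_{E_n}t_i\,d\nu_\lambda(t)$. Expanding the density,
$$m_i=n!\sum_{j=1}^{n+1}\lambda_j\int_{E_n}t_it_j^{\lambda_{j,n}}\,dt,$$
I would apply Lemma \ref{lem2} with its power index playing the role of $j$ and its linear index the role of $i$, splitting the sum into the term $j=i$ and the terms $j\neq i$. The factors $n!$ and $\lambda_j$ cancel against the lemma's formulas and one recovers exactly the two-part expression defining $\tilde{\lambda}(i,n)$ in \eqref{tilde}; thus $m_i=\tilde{\lambda}(i,n)$. Summing over $i$ and using $\sum_{i=1}^{n+1}t_i=1$ forces $\sum_{i=1}^{n+1}\tilde{\lambda}(i,n)=\nu_\lambda(E_n)=1$, and as each $\tilde{\lambda}(i,n)\geq0$ the weights form a probability vector, so $\sum_{i=1}^{n+1}\tilde{\lambda}(i,n)a_i$ is a bona fide convex combination lying in $C$.

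With this in hand, the right-hand inequality follows pointwise: for each fixed $t\in E_n$ one has $\sum_{i=1}^{n+1}t_i=1$, so the discrete Jensen inequality \eqref{S} gives $f\big(\sum_{i}t_ia_i\big)\leq\sum_{i}t_if(a_i)$; integrating against $d\nu_\lambda$ and invoking $m_i=\tilde{\lambda}(i,n)$ produces the upper bound. The left-hand inequality is the integral Jensen inequality applied to $f$ and the probability measure $\nu_\lambda$: the (vector-valued, computed component-wise) barycenter of the map $t\mapsto\sum_{i}t_ia_i$ equals $\sum_{i}m_ia_i=\sum_{i}\tilde{\lambda}(i,n)a_i$ by the moment computation, whence $f\big(\sum_{i}\tilde{\lambda}(i,n)a_i\big)\leq\int_{E_n}f\big(\sum_{i}t_ia_i\big)\,d\nu_\lambda$.

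The main obstacle is that \eqref{IJI} is stated only for functions of one real variable, whereas here $f$ is convex on a convex subset of ${\mathbb R}^n$. I would bridge this through the supporting-hyperplane characterization of convexity: at the barycenter $\bar a:=\sum_{i}\tilde{\lambda}(i,n)a_i\in C$ there is an affine minorant $\ell\leq f$ with $\ell(\bar a)=f(\bar a)$; integrating $\ell\circ\phi\leq f\circ\phi$, where $\phi(t):=\sum_{i}t_ia_i$, and using that $\ell$ is affine (so $\int_{E_n}\ell\circ\phi\,d\nu_\lambda=\ell(\bar a)=f(\bar a)$) reproduces the lower bound. The only delicate point is the existence of such a supporting affine function, which holds at interior points of $C$ and extends to boundary points by a routine closure argument. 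Finally, for concave $f$ one applies all of the above to $-f$, reversing both inequalities in \eqref{50}.
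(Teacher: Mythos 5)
Your proposal follows the same route as the paper's own proof: Lemma \ref{lem1} certifies that $\nu_\lambda$ is a probability measure, Lemma \ref{lem2} (split into the cases $j=i$ and $j\neq i$) gives the moment identity $\int_{E_n}t_i\,d\nu_\lambda(t)=\tilde{\lambda}(i,n)$, and the two bounds in \eqref{50} then come from the discrete and integral Jensen inequalities. Your moment computation, the verification that the $\tilde{\lambda}(i,n)$ form a probability vector, and the pointwise-then-integrate argument for the right-hand bound all match the paper. You also go beyond the paper on one real point: the paper's integral Jensen inequality \eqref{IJI} is stated only for convex functions of one real variable, yet in Theorem \ref{thA} the function $f$ is convex on a convex subset of ${\mathbb R}^n$; the paper invokes ``the discrete and integral Jensen inequalities'' without comment, while you supply the supporting-hyperplane proof of the multivariate version. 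That is a genuine improvement in rigor.

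The one step that does not survive scrutiny is your treatment of the case where the barycenter $\bar a:=\sum_i\tilde{\lambda}(i,n)a_i$ lies on the boundary of $C$: there is no ``routine closure argument'' producing a finite affine minorant touching $f$ at a boundary point. For instance, $f(x)=-\sqrt{x}$ is convex on $C=[0,1]$ but admits no finite supporting line at $x=0$. The correct repair is to forget $C$ and work inside the polytope $P:=\mathrm{conv}\{a_1,\dots,a_{n+1}\}$: the map $\phi(t):=\sum_i t_ia_i$ takes all its values in $P$, the restriction of $f$ to $P$ is convex, and since $\lambda\in int(E_n)$ forces every $\tilde{\lambda}(i,n)>0$ (each summand in \eqref{tilde} is strictly positive because $\lambda_{j,n}+1>0$ and all $\lambda_j>0$), the barycenter $\bar a$ is a strictly positive convex combination of the $a_i$ and hence lies in the relative interior of $P$. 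A finite convex function always admits a supporting affine minorant, relative to the affine hull of $P$, at a relative interior point, so your integration argument goes through verbatim with $P$ in place of $C$. With this substitution your proof is complete and correct.
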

\begin{proof}
Lemma \ref{lem1} asserts that $\nu_{\bf \lambda}(t)$ is really a probability measure on  $E_n$. With this, the discrete and integral Jensen inequalities yield
\begin{equation}\label{A}
f\left(\sum_{j=1}^{n+1}a_j\displaystyle\int_{E_n}t_jd\nu_{\lambda}(t)\right)\leq\displaystyle\int_{E_n}f\left(\sum_{j=1}^{n+1}t_ja_j\right)d\nu_\lambda(t)
\leq\sum_{j=1}^{n+1}f(a_j)\displaystyle\int_{E_n}t_jd\nu_{\lambda}(t).
\end{equation}
If we replace $d\nu_\lambda(t)$ by its expression given in \eqref{nu}, we get
$$\displaystyle\int_{E_n}t_jd\nu_{\lambda}(t)=n!\sum_{i=1}^{n+1}\lambda_i\int_{E_n}t_jt_i^{\lambda_{i,n}}dt,$$
which gives by using Lemma \ref{lem2} and making some simple simplifications,
$$\displaystyle\int_{E_n}t_jd\nu_{\lambda}(t)=\sum_{i=1,i\neq j}^{n+1}\frac{\lambda_{i+1}}{\lambda_{i,n}+n+1}+\lambda_{j+1}\frac{\lambda_{j,n}+1}{\lambda_{j,n}+n+1}:=\tilde{\lambda}(j,n).$$
Substituting these in \eqref{A} we get \eqref{50}, so completing the proof.
\end{proof}

Now, let us state the following remark which may be of interest for the reader.

\begin{remark}\label{rem2} (i) One can check that $\sum_{i=1}^{n+1}\tilde{\lambda}(i,n)=1$ for any integer $n \ge 1$.\\
(ii) If $n=1$ and $\lambda=1/2$ then $d\nu_\lambda(t)=dt$ and \eqref{50} is reduced to \eqref{1}.\\
(iii) If $n=1$ then for $i=1,2$ we have, by using the definition of $\lambda_{i,n}$ and Proposition \ref{pr1},(ii),
$$\lambda_{i,1}:=\Big[\frac{\lambda_i}{\lambda_{i+1}}\Big]_{-1}-1=\frac{\lambda_i}{\lambda_{i+1}}-1.$$
This, with $\lambda_1=\lambda$ and $\lambda_2=1-\lambda$ and the fact that $\lambda_3=\lambda_1$, gives
$$\lambda_{1,1}=\frac{\lambda}{1-\lambda}-1=\frac{2\lambda-1}{1-\lambda},\; \lambda_{2,1}=\frac{1-\lambda}{\lambda}-1=\frac{1-2\lambda}{\lambda},$$
which with \eqref{tilde} and some elementary calculations yields $\tilde{\lambda}(1,1)=\lambda$ and $\tilde{\lambda}(2,1)=1-\lambda$.
Thus, for $n=1$ and $\lambda\in]0,1[$, \eqref{50} can be written as follows
\begin{equation*}
f\Big(\lambda a+(1-\lambda)b\Big)\leq\int_0^1f\Big(ta+(1-t)b\Big)d\nu_\lambda(t)\leq\lambda f(a)+(1-\lambda)f(b),
\end{equation*}
where $\nu_\lambda$ is the probability measure defined on $[0,1]$ by
\begin{equation*}
d\nu_\lambda(t)=\Big(\lambda t^{\frac{2\lambda-1}{1-\lambda}}+(1-\lambda)(1-t)^{\frac{1-2\lambda}{\lambda}}\Big)dt.
\end{equation*}
(iv) If we choose $\lambda_i=\frac{1}{n+1}$ for all $i=1,2,...,n+1$ then by Proposition \ref{pr1},(iii) we have $\lambda_{i,n}=[n!]_{-n}-1=0$ and by \eqref{tilde} we get $\tilde{\lambda}(i,n)=\frac{1}{n+1}$ for any $i=1,2,...,n+1$. It follows that $d\nu_\lambda(t)=n!\;dt$ and therefore the inequalities \eqref{50} coincide with \eqref{14}.
\end{remark}

Now, we wish to give a weighted Hermite-Hadamard inequalities where the left bound and the right bound are, respectively, the two sides involved in \eqref{S}. We need to state the following lemma.

\begin{lemma}\label{lem3}
Let $\alpha:=(\alpha_1,\alpha_2,...,\alpha_n)\in{\mathbb R}^{n}$ with $\alpha_i>0$, for $i=1,2,...,n$. Consider the $\alpha$-power simplex of ${\mathbb R}^n$ defined by
$$F_n:=\Big\{(t_1,t_2,...,t_n)\in{\mathbb R}^n:\;t_i\geq0, i=1,2,..,n,\; t_1^{\alpha_1}+t_2^{\alpha_2}+...+t_n^{\alpha_n}\leq1\Big\}.$$
The Lebesgue Volume of $F_n$ is given by
\begin{equation}\label{Vol}
|F_n|:=\int_{F_n}\;dt=\frac{B_n\big(\alpha_1^{-1},\alpha_2^{-1},...,\alpha_n^{-1}\big)}{\Big(\prod_{i=1}^n\alpha_i\Big)\Big(\sum_{i=1}^n\alpha_i^{-1}\Big)}.
\end{equation}
In another part, we have
\begin{equation}\label{vol}
\frac{1}{|F_n|}\int_{F_n}t_i^{\alpha_i}\;dt=\frac{\alpha_i^{-1}}{\sum_{i=1}^n\alpha_i^{-1}+1},\;\; i=1,2,...,n.
\end{equation}
\end{lemma}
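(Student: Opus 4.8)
The plan is to transport the integrals over the curved region $F_n$ to the standard simplex $E_n$ by the power substitution $u_i:=t_i^{\alpha_i}$, $i=1,\dots,n$, and then read off the values from the multivariate beta function \eqref{32}--\eqref{33}. Since each $\alpha_i>0$, this map is a bijection of $F_n$ onto $E_n$ (away from a boundary set of Lebesgue measure zero), carrying the constraint $\sum_i t_i^{\alpha_i}\le1$ into $\sum_i u_i\le1$; its inverse is $t_i=u_i^{1/\alpha_i}$, whose Jacobian matrix is diagonal with entries $\alpha_i^{-1}u_i^{1/\alpha_i-1}$, so that $dt=\big(\prod_{i=1}^n\alpha_i^{-1}\big)\prod_{i=1}^n u_i^{1/\alpha_i-1}\,du$. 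This change of variables is the heart of the argument, and I expect it to do essentially all the work.

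For the volume formula \eqref{Vol}, applying the substitution gives $|F_n|=\big(\prod_{i=1}^n\alpha_i^{-1}\big)\int_{E_n}\prod_{i=1}^n u_i^{1/\alpha_i-1}\,du$. The key observation is that, because the last simplex coordinate $u_{n+1}=1-\sum_{i=1}^n u_i$ enters with exponent $0$, this integral is exactly the value $B_{n+1}\big(\alpha_1^{-1},\dots,\alpha_n^{-1},1\big)$ in the sense of \eqref{32}. Invoking the gamma representation \eqref{33} together with $\Gamma(1)=1$ and the functional equation $\Gamma(x+1)=x\Gamma(x)$ applied at $x=\sum_{i=1}^n\alpha_i^{-1}$, I can rewrite this value as $B_n\big(\alpha_1^{-1},\dots,\alpha_n^{-1}\big)\big/\sum_{i=1}^n\alpha_i^{-1}$. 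Reinserting the Jacobian factor $\prod_{i=1}^n\alpha_i^{-1}=1/\prod_{i=1}^n\alpha_i$ then yields \eqref{Vol} at once.

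For the second identity \eqref{vol}, the same substitution turns $t_i^{\alpha_i}$ into the single monomial $u_i$, so that $\int_{F_n}t_i^{\alpha_i}\,dt=\big(\prod_{j=1}^n\alpha_j^{-1}\big)\int_{E_n}u_i\prod_{j=1}^n u_j^{1/\alpha_j-1}\,du$. The extra factor $u_i$ merely raises the $i$-th exponent by one, identifying the integral with $B_{n+1}\big(\alpha_1^{-1},\dots,\alpha_i^{-1}+1,\dots,\alpha_n^{-1},1\big)$. Expanding by \eqref{33}, using $\Gamma(\alpha_i^{-1}+1)=\alpha_i^{-1}\Gamma(\alpha_i^{-1})$ in the numerator and $\Gamma(S+2)=(S+1)S\,\Gamma(S)$ in the denominator with $S:=\sum_{j=1}^n\alpha_j^{-1}$, and then dividing by the already computed $|F_n|$, all gamma factors and the product $\prod_{j=1}^n\alpha_j^{-1}$ cancel, leaving precisely $\alpha_i^{-1}/(S+1)$, which is \eqref{vol}.

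The only delicate point I anticipate is the rigorous justification of the substitution itself: when $\alpha_i>1$ the Jacobian factor $u_i^{1/\alpha_i-1}$ blows up as $u_i\to0$, so the map fails to be a smooth diffeomorphism on all of $F_n$. However, this degeneracy occurs only where some $t_i=0$, i.e. on $\partial F_n$, a set of Lebesgue measure zero; hence the change-of-variables formula applies on the interiors and the integrals are unaffected (the integrability of $u_i^{1/\alpha_i-1}$ near $0$ being guaranteed by $1/\alpha_i>0$). Beyond this measure-zero caveat, the entire proof is bookkeeping with the gamma identities already recorded in Section~2.
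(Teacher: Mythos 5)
Your proof is correct and takes essentially the same route as the paper: the substitution $u_i=t_i^{\alpha_i}$, identification of the transformed integral with $B_{n+1}\big(\alpha_1^{-1},\dots,\alpha_n^{-1},1\big)$ via \eqref{32}, and reduction through \eqref{33} and $\Gamma(x+1)=x\,\Gamma(x)$. You actually supply more detail than the paper, which leaves the computation of \eqref{vol} to the reader and does not address the measure-zero justification of the change of variables.
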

\begin{proof}
Making the change of variables $u_i=t_i^{\alpha_i}$ for $i=1,2,...,n$ and computing the corresponding Jacobian, we get
$$|F_n|=\frac{1}{\prod_{i=1}^n\alpha_i}\int_{E_n}\prod_{i=1}^nu_i^{\alpha_i^{-1}-1}\;du,$$
where $du:=du_1du_2...du_n$. This, with \eqref{32}, yields
$$|F_n|=\frac{1}{\prod_{i=1}^n\alpha_i}B_{n+1}\big(\alpha_1^{-1},\alpha_2^{-1},...,\alpha_n^{-1},1\big),$$
which, with \eqref{33} and the relationship $\Gamma(x+1)=x\;\Gamma(x)$, implies \eqref{Vol}. To establish \eqref{vol} we use the same tools and we left the details here to the reader.
\end{proof}

Now, we are in the position to state the following result which provides an answer to our previous claim.

\begin{theorem}
With the same notations and hypotheses as in Theorem \ref{thA}, the following inequalities hold
\begin{equation}\label{55}
f\Big(\nabla_{\bf \lambda} a\Big)\leq\displaystyle\int_{E_n}f\left(\sum_{i=1}^{n+1}t_ia_i\right)d\mu_\lambda(t)\leq\nabla_{\bf \lambda} f(a),
\end{equation}
where $\mu_{\bf \lambda}(t)$ is the probability measure on  $E_n$ defined through
\begin{equation}\label{mu}
d\mu_{\lambda}(t):=\frac{\sum_{i=1}^n\alpha_i^{-1}}{B_n\big(\alpha_1^{-1},...,\alpha_n^{-1}\big)}
\prod_{i=1}^{n}t_i^{\alpha_i^{-1}-1}\;dt,\; \mbox{with}\; \alpha_i:=\frac{\lambda_{n+1}}{\lambda_{i}}.
\end{equation}
If $f$ is concave, the inequalities in \eqref{55} are reversed.
\end{theorem}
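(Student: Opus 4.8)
The plan is to follow the very same scheme as in the proof of Theorem \ref{thA}: first confirm that $\mu_\lambda$ is genuinely a probability measure on $E_n$, then apply the discrete and integral Jensen inequalities to sandwich the integral, and finally identify the resulting barycentric weights $\int_{E_n}t_j\,d\mu_\lambda(t)$ with the prescribed $\lambda_j$.

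For the normalization I would compute $\int_{E_n}\prod_{i=1}^n t_i^{\alpha_i^{-1}-1}\,dt$ directly through \eqref{32} and \eqref{33}, recognizing it as $B_{n+1}(\alpha_1^{-1},\ldots,\alpha_n^{-1},1)$; the identity $\Gamma(x+1)=x\Gamma(x)$ then collapses it to $B_n(\alpha_1^{-1},\ldots,\alpha_n^{-1})/\sum_{i=1}^n\alpha_i^{-1}$, which is exactly the reciprocal of the constant appearing in \eqref{mu}. Hence $\int_{E_n}d\mu_\lambda(t)=1$ (equivalently, this is Lemma \ref{lem3} read through the substitution $u_i=t_i^{\alpha_i}$). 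With $\mu_\lambda$ confirmed as a probability measure, the two Jensen inequalities give
$$f\left(\sum_{j=1}^{n+1}a_j\int_{E_n}t_j\,d\mu_\lambda(t)\right)\leq\int_{E_n}f\left(\sum_{j=1}^{n+1}t_ja_j\right)d\mu_\lambda(t)\leq\sum_{j=1}^{n+1}f(a_j)\int_{E_n}t_j\,d\mu_\lambda(t),$$
so everything reduces to computing the moments $\int_{E_n}t_j\,d\mu_\lambda(t)$.

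The crux is this moment computation. For $j\in\{1,\ldots,n\}$ I would use the change of variables $u_i=t_i^{\alpha_i}$ underlying Lemma \ref{lem3}, which turns the $\mu_\lambda$-integral over $E_n$ into the uniform average over the power simplex $F_n$; invoking \eqref{vol} then yields $\int_{E_n}t_j\,d\mu_\lambda(t)=\alpha_j^{-1}/\big(\sum_{i=1}^n\alpha_i^{-1}+1\big)$. The decisive algebraic step is to substitute $\alpha_i=\lambda_{n+1}/\lambda_i$: since $\sum_{i=1}^n\lambda_i=1-\lambda_{n+1}$, one finds $\sum_{i=1}^n\alpha_i^{-1}=(1-\lambda_{n+1})/\lambda_{n+1}$, hence $\sum_{i=1}^n\alpha_i^{-1}+1=1/\lambda_{n+1}$, and the moment simplifies cleanly to $(\lambda_j/\lambda_{n+1})/(1/\lambda_{n+1})=\lambda_j$. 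For the remaining index $j=n+1$ I would use $t_{n+1}=1-\sum_{i=1}^n t_i$ together with the already-established probability-measure property to get $\int_{E_n}t_{n+1}\,d\mu_\lambda(t)=1-\sum_{i=1}^n\lambda_i=\lambda_{n+1}$.

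Substituting $\int_{E_n}t_j\,d\mu_\lambda(t)=\lambda_j$ into the Jensen chain, the left-hand bound becomes $f\big(\sum_j\lambda_j a_j\big)=f(\nabla_\lambda a)$ and the right-hand bound becomes $\sum_j\lambda_j f(a_j)=\nabla_\lambda f(a)$, which is precisely \eqref{55}; the concave case follows by reversing both Jensen inequalities. I expect the only genuine obstacle to be the bookkeeping in the moment computation and the simplification $\sum_{i=1}^n\alpha_i^{-1}+1=1/\lambda_{n+1}$. Once that identity is in hand, the weights fall out as $\lambda_j$ and the theorem is immediate.
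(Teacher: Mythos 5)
Your proof is correct, but it runs in the opposite direction to the paper's. The paper never invokes Jensen's inequality on $E_n$ with respect to $\mu_\lambda$ at all: it applies the discrete and integral Jensen inequalities on the power simplex $F_n$ of Lemma \ref{lem3}, with the normalized Lebesgue measure $|F_n|^{-1}\,dt$ and the map $t\mapsto\sum_{i=1}^{n+1}t_i^{\alpha_i}a_i$, where $t_{n+1}^{\alpha_{n+1}}:=1-\sum_{i=1}^{n}t_i^{\alpha_i}$; formula \eqref{vol} together with the simplification $\sum_{i=1}^n\alpha_i^{-1}+1=1/\lambda_{n+1}$ (the same identity you isolate as the crux) gives the weights $|F_n|^{-1}\int_{F_n}t_i^{\alpha_i}\,dt=\lambda_i$ directly, and only at the final step does the substitution $u_i=t_i^{\alpha_i}$ transport the middle integral onto $E_n$, which is precisely where the measure $\mu_\lambda$ of \eqref{mu} comes from. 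You instead work intrinsically on $E_n$, copying the template of the paper's proof of Theorem \ref{thA}: verify $\int_{E_n}d\mu_\lambda(t)=1$ via \eqref{32}--\eqref{33}, apply Jensen with respect to $\mu_\lambda$, and compute the moments $\int_{E_n}t_j\,d\mu_\lambda(t)=\lambda_j$ by pulling back to $F_n$ and quoting \eqref{vol}, with $j=n+1$ handled by the normalization. The two arguments are mathematically equivalent and rest on the same Lemma \ref{lem3}, but each ordering buys something: the paper's gets the probability-measure property for free (a normalized Lebesgue measure needs no check) and uses the change of variables only once, while yours is structurally parallel to Theorem \ref{thA} and makes the conceptual content of \eqref{55} explicit --- namely that $\mu_\lambda$ is a probability measure on $E_n$ whose barycenter is $(\lambda_1,\dots,\lambda_{n+1})$ --- at the cost of one extra normalization computation.
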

\begin{proof}
Let $F_n$ be as defined in Lemma \ref{lem3}. By the discrete and integral Jensen inequalities we get
\begin{equation}\label{F}
f\left(\frac{1}{|F_n|}\sum_{i=1}^{n+1}a_i\int_{F_n}t_i^{\alpha_i}\;dt\right)\leq\frac{1}{|F_n|}\int_{F_n}f\Big(\sum_{i=1}^{n+1}t_i^{\alpha_i}a_i\Big)dt
\leq\frac{1}{|F_n|}\sum_{i=1}^{n+1}f(a_i)\int_{F_n}t_i^{\alpha_i}\;dt,
\end{equation}
where $dt:=dt_1dt_2...dt_n$\, and\, $t_{n+1}^{\alpha_{n+1}}:=1-\sum_{i=1}^nt_i^{\alpha_i}$.

Now, by \eqref{vol} with the fact that $\alpha_i:=\lambda_{n+1}/\lambda_i$ we get
$$\frac{1}{|F_n|}\int_{F_n}t_i^{\alpha_i}\;dt=\frac{\alpha_i^{-1}}{\sum_{i=1}^n\alpha_i^{-1}+1}=\frac{\lambda_{n+1}^{-1}\lambda_i}
{\sum_{i=1}^n\lambda_{n+1}^{-1}\lambda_i+1}=\frac{\lambda_i}{\sum_{i=1}^n\lambda_i+\lambda_{n+1}}=\lambda_i,$$
which when substituted in the extreme sides of \eqref{F} yields the two extremes sides of \eqref{55}.

Finally, by the same change of variables used in Lemma \ref{lem3} and using \eqref{Vol} and \eqref{mu}, we can check that
$$\frac{1}{|F_n|}\int_{F_n}f\Big(\sum_{i=1}^{n+1}t_i^{\alpha_i}a_i\Big)dt=\displaystyle\int_{E_n}f\left(\sum_{i=1}^{n+1}t_ia_i\right)d\mu_\lambda(t),$$
which concludes the proof.
\end{proof}

We end this section by stating the following remark which explains that \eqref{55} is a generalization of \eqref{1} and \eqref{14}.

\begin{remark}\label{rem3}
(i) If $n=1$ and $\lambda=1/2$ then $d\nu_\lambda(t)=dt$ and \eqref{55} is reduced to \eqref{1}.\\
(ii) If $n=1$ then simple computations lead to
$$\alpha_1=\frac{\lambda}{1-\lambda},\;d\mu_{\lambda}(t)=\frac{1-\lambda}{\lambda}t^{\frac{1-2\lambda}{\lambda}}\;dt.$$
So, for $n=1$ and $\lambda\in]0,1[$, \eqref{55} becomes
\begin{equation*}
f\Big(\lambda a+(1-\lambda)b\Big)\leq\int_0^1f\Big(ta+(1-t)b\Big)d\mu_\lambda(t)\leq\lambda f(a)+(1-\lambda)f(b),
\end{equation*}
(iii) If we choose $\lambda_i=\frac{1}{n+1}$ for all $i=1,2,...,n+1$ then $\alpha_i=1$ for all $i=1,2,...,n$. By \eqref{mu}, we infer that $d\mu_{\lambda}(t)=n!\;dt$ and so \eqref{55} coincides with \eqref{14}.
\end{remark}

\section{\bf Application for weighted multivariate means}

As we have already pointed out, this section focuses on an application of the previous weighted Hermite-Hadamard inequalities to construct weighted multivariate means. We keep the same notations as in the previous section and, for the sake of simplicity, we set $a:=(a_1,a_2,...,a_{n+1})$, with $a_1>0, a_2>0,..., a_{n+1}>0$. We also put $\lambda:=(\lambda_1,\lambda_2,...,\lambda_n)\in E_n$, with $\lambda_{n+1}:=1-\sum_{i=1}^n\lambda_i$. The notation $(e_1,e_2,...,e_{n})$ refers to the canonical basis of ${\mathbb R}^{n}$ and we put $e:=\frac{1}{n+1}(1,1,...,1)\in{\mathbb R}^{n}$. The zero-vector of ${\mathbb R}^n$ will be denoted by ${\bf 0}$.

\subsection{\bf Background about some standard means.} By multivariate mean $m$ \cite{MAT}, we understand a map in $(n+1)$-variables $a_1,a_2,...,a_n,a_{n+1}>0$ such that
\begin{equation}\label{mean}
\min(a_1,a_2,...,a_{n+1})\leq m(a_1,a_2,...,a_{n+1})\leq\max(a_1,a_2,...,a_{n+1}).
\end{equation}
Symmetric and homogeneous multivariate means are defined in the habitual way. Assuming the existence of the first partial derivatives of $m$ at the point $\left(1,1, \ldots, 1\right)$. We say that $m$ is a weighted mean if
\begin{definition}\label{DP}
With the previous notation, let $m_\lambda$ be a $(n+1)$-variables map indexed by $\lambda:=(\lambda_1,\lambda_2,...,\lambda_n)\in E_n$. We say that $m_\lambda$ is a weighted multivariate mean if the following requirements are satisfied:\\
(i) $m_\lambda$ is a (multivariate) mean in the sense of \eqref{mean}, for any $\lambda:=(\lambda_1,\lambda_2,...,\lambda_n)\in E_n$,\\
(ii) $m_{e_i}(a_1,a_2,...,a_{n+1})=a_i$ for any $i=1,2,...,n$, and $m_{\bf 0}(a_1,a_2,...,a_{n+1})=a_{n+1}$,\\
(iii) $m_e(a_1,a_2,...,a_{n+1})$ is symmetric in $a_1,a_2,..,a_{n+1}$,\\
(iv) The first partial derivatives of the multivariate map $(a_1,a_2,...,a_{n+1})\longmapsto m_\lambda(a_1,a_2,...,a_{n+1})$ exist at the point $\left(1,1, \ldots, 1\right)\in{\mathbb R}^{n+1}$, and we have
$$\lambda_i=\frac{\partial m_\lambda}{\partial a_i}(1,1, \ldots, 1), \quad \text{for all }\; i=1,2, \ldots, n+1.$$

\end{definition}
If $m_\lambda$ is symmetric in $a_1, \ldots, a_{n+1}$, the weights $\lambda_i$ are independent of $i$, leading to $\lambda_i=\frac{1}{n+1}$, i.e. $m_\lambda= m_e$. In this case, $m_e$ is called the associated symmetric mean of $m_\lambda$ and $m_\lambda$ is called the weighted (multivariate) $m_e$-mean.

The standard weighted multivariate means are defined as follows
\begin{equation*}
\nabla_\lambda a:=\sum_{i=1}^{n+1}\lambda_ia_i,\; !_\lambda a:=\Big(\sum_{i=1}^{n+1}\lambda_ia_i^{-1}\Big)^{-1}, \;
\sharp_\lambda a:=\prod_{i=1}^{n+1}a_i^{\lambda_i}.
\end{equation*}
They are known in the literature as the weighted arithmetic mean, the weighted harmonic mean, and the weighted geometric mean, respectively. We can easily check that they satisfy the condition of Definition \ref{DP} and their associated symmetric means are, respectively, given by
$$\nabla a:=\frac{1}{n+1}\sum_{i=1}^{n+1}a_i,\; !a:=(n+1)\Big(\sum_{i=1}^{n+1}a_i^{-1}\Big)^{-1},\; \sharp a:=\sqrt[n+1]{\prod_{i=1}^{n+1}a_i}.$$
The following inequalities $!_\lambda a\leq \sharp_\lambda a\leq \nabla_\lambda a$ hold, and play an important role in mathematical inequalities as well as in applied science such as in probabilities and statistics. For $n=1$, they are the (bivariate) weighted means defined, for $a, b>0$ and $\lambda\in[0,1]$, by
$$a\nabla_\lambda b:=(1-\lambda)a+\lambda b,\; a!_\lambda b:=\Big((1-\lambda)a^{-1}+\lambda b^{-1}\Big)^{-1},\; a\sharp_\lambda b:=a^{1-\lambda}b^{\lambda},$$
respectively. Their associated symmetric means correspond to the case where $\lambda=1/2$ and are, respectively, given by
$$a\nabla b=\frac{a+b}{2}, a!b=\dfrac{2ab}{a+b},\; a\sharp b=\sqrt{ab}.$$
Two other weighted means in two variables $a,b>0$ have been introduced in the literature \cite{PSMA}, namely
\begin{equation}\label{L}
L_\lambda(a,b):=\dfrac{1}{\log a-\log b}\left(\dfrac{1-\lambda}{\lambda}\big(a-a^{1-\lambda}b^\lambda\big)+\dfrac{\lambda}{1-\lambda}\big(a^{1-\lambda}b^\lambda-b\big)\right),\; L_\lambda(a,a)=a
\end{equation}
\begin{equation}\label{I}
I_\lambda(a,b):=\frac{1}{e}\Big(a\nabla_\lambda b\Big)^{\frac{(1-2\lambda)(a\nabla_\lambda b)}
{\lambda(1-\lambda)(b-a)}}\left(\dfrac{b^{\frac{\lambda b}{1-\lambda}}}{a^{\frac{(1-\lambda)a}{\lambda}}}\right)^{\frac{1}{b-a}},\; I_\lambda(a,a)=a,
\end{equation}
with $L_0(a,b):=\lim\limits_{\lambda\downarrow0}L_\lambda(a,b)=a$ and $L_1(a,b):=\lim\limits_{\lambda\uparrow1}L_\lambda(a,b)=b$, and similar equalities for $I_\lambda(a,b)$. Their associated symmetric means (i.e. corresponding to $\lambda=1/2$) are the known logarithmic mean and identric mean, respectively, defined by:
$$L(a,b):=\frac{b-a}{\log b-\log a},\; L(a,a)=a;\;\; I(a,b):=\frac{1}{e}\Big(\frac{b^b}{a^a}\Big)^{\frac{1}{b-a}},\; I(a,a)=a.$$
Then $L_\lambda(a,b)$ and $I_\lambda(a,b)$ are called the weighted logarithmic mean and the weighted identric mean, respectively. The following inequalities hold, \cite{PSMA,RAF}
\begin{equation*}
a\sharp_\lambda b\leq L_\lambda(a,b)\leq a\nabla_\lambda b,\;\;
a\sharp_\lambda b\leq I_\lambda(a,b)\leq a\nabla_\lambda b.
\end{equation*}

We have also the following chain of inequalities, see \cite{DP} for instance
\begin{equation*}
a!b\leq a\sharp b\leq L(a,b)\leq I(a,b)\leq a\nabla b.
\end{equation*}

\begin{remark}\label{rem1}
(i) When a weighted multivariate mean $m_\lambda$ is given, its associated symmetric mean $m_{e}$ is of course unique. However, it is possible to have more than one weighted multivariate mean whose the associated symmetric mean is the same. For more details, see the subsections below. See also \cite{RCT} where the authors introduced two weighted logarithmic means and one weighted identric mean that are mutually different from \eqref{L} and \eqref{I}, respectively.\\
(ii) For another short writing of \eqref{L} and \eqref{I}, with more properties and applications, we can consult \cite{RAF}.
\end{remark}

Given the aforementioned, it is normal to pose the following question:

{\it Question:} In multiple variables, how can one introduce a weighted logarithmic mean and a weighted identric mean?

The weighted Hermite-Hadamard inequality investigated previously will be a good tool for answering this latter question. We itemize the details in the following subsections.

\subsection{\bf Weighted multivariate logarithmic mean.}

As already pointed out, our aim here is to provide a new weighted logarithmic mean in multiple variables. We begin by stating the following proposition.

\begin{proposition}
Let ${\bf\lambda}:=(\lambda_1,\lambda_2,...,\lambda_n)\in int(E_n)$ and $a:=(a_1,a_2,...,a_n,a_{n+1})\in (0,\infty)^{n+1}$. We set
\begin{equation}\label{L1}
\mathcal{L}_\lambda(a):=\int_{E_{n}}  \sharp_{t}a\, d \mu_\lambda(t),
\end{equation}
\begin{equation}\label{L2}
{\mathbb L}_\lambda(a):=\left(\int_{E_n}\,\left(\nabla_t\,a\right)^{-1} d\mu_\lambda(t)\right)^{-1},
\end{equation}
with $t_{n+1}=1-\sum_{i=1}^{n}t_i$ and $\mu_\lambda$ is the measure defined in \eqref{mu}.
Then ${\mathcal L}_\lambda$ and ${\mathbb L}_\lambda$ are multivariate weighted means, that we call weighted logarithmic means.
\end{proposition}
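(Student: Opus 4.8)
The plan is to verify, for each of $\mathcal{L}_\lambda$ and $\mathbb{L}_\lambda$, the four defining requirements (i)--(iv) of Definition~\ref{DP}. The single fact driving every step is the moment identity
\begin{equation*}
\int_{E_n}t_i\,d\mu_\lambda(t)=\lambda_i,\qquad i=1,2,\ldots,n+1,
\end{equation*}
which is exactly the computation carried out in the proof of the theorem establishing \eqref{55}: it follows from \eqref{vol} together with the change of variables defining $\mu_\lambda$ in \eqref{mu}, and for $i=n+1$ from $\sum_{i=1}^{n+1}\lambda_i=1$. In other words, $\mu_\lambda$ is a probability measure on $E_n$ whose barycenter is precisely $(\lambda_1,\ldots,\lambda_{n+1})$. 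With this in hand the mean property (i) is immediate: for every $t\in E_n$ the numbers $t_1,\ldots,t_{n+1}$ are nonnegative with unit sum, so $\sharp_t a$ and $\nabla_t a$ are a weighted geometric and a weighted arithmetic mean of $a_1,\ldots,a_{n+1}$ and hence lie in $[\min_i a_i,\max_i a_i]$; averaging $\sharp_t a$ against the probability measure $\mu_\lambda$ keeps the value in this interval, while for $\mathbb{L}_\lambda$ one notes that $(\nabla_t a)^{-1}$ ranges in $[(\max_i a_i)^{-1},(\min_i a_i)^{-1}]$, so its $\mu_\lambda$-average does too and its reciprocal returns to $[\min_i a_i,\max_i a_i]$.

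Next I would treat (iv) and (iii). Differentiating under the integral sign, which is legitimate because the integrands and their $a$-derivatives are continuous and bounded for $a$ near $(1,\ldots,1)$ with $a_i>0$, I would use that at $a=(1,\ldots,1)$ one has $\sharp_t a=1$ and $\nabla_t a=\sum_i t_i=1$. For $\mathcal{L}_\lambda$ this gives $\partial\mathcal{L}_\lambda/\partial a_i(1,\ldots,1)=\int_{E_n}t_i\,d\mu_\lambda=\lambda_i$; for $\mathbb{L}_\lambda$ the chain rule applied to $G(a):=\int_{E_n}(\nabla_t a)^{-1}\,d\mu_\lambda$, with $G(1,\ldots,1)=1$ and $\partial G/\partial a_i(1,\ldots,1)=-\int_{E_n}t_i\,d\mu_\lambda=-\lambda_i$, again yields $\lambda_i$. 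This is (iv). For the symmetry requirement (iii) I would specialize to $\lambda=e$, where $\alpha_i=\lambda_{n+1}/\lambda_i=1$ for all $i$, so $\mu_e$ reduces to the normalized uniform measure $n!\,dt$ on $E_n$ (using $B_n(1,\ldots,1)=1/(n-1)!$). Permuting $a_1,\ldots,a_{n+1}$ amounts to permuting the barycentric coordinates $t_1,\ldots,t_{n+1}$, a transformation under which $n!\,dt$ is invariant, so $\mathcal{L}_e(a)$ and $\mathbb{L}_e(a)$ are symmetric.

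The delicate point, and the one I expect to be the main obstacle, is (ii): the vertex values $m_{e_i}(a)=a_i$ for $i\le n$ and $m_{\bf 0}(a)=a_{n+1}$. These are boundary points of $E_n$, where $\alpha_i=\lambda_{n+1}/\lambda_i$ degenerates and the density of $\mu_\lambda$ ceases to be integrable, so they must be read as the limits $\lambda\to e_i$ (resp. $\lambda\to{\bf 0}$) and cannot be obtained by substitution. Here the moment identity is exactly what makes the concentration rigorous: since $0\le t_i\le1$ and $\int_{E_n}(1-t_i)\,d\mu_\lambda=1-\lambda_i\to0$, the nonnegative function $1-t_i$ tends to $0$ in $L^1(\mu_\lambda)$, forcing $\mu_\lambda$ to concentrate near the vertex $t_i=1$.

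I would then convert this concentration into convergence of the means by an $L^1$ estimate. Writing $\log\sharp_t a-\log a_i=\sum_{j\neq i}t_j(\log a_j-\log a_i)$ gives $|\log\sharp_t a-\log a_i|\le M(1-t_i)$ with $M=\max_j|\log a_j-\log a_i|$, whence $|\sharp_t a-a_i|\le C(1-t_i)$ on $E_n$ and $\int_{E_n}|\sharp_t a-a_i|\,d\mu_\lambda\le C(1-\lambda_i)\to0$, so $\mathcal{L}_\lambda(a)\to a_i$. Likewise $|\nabla_t a-a_i|=|\sum_{j\neq i}t_j(a_j-a_i)|\le M'(1-t_i)$ and, since $\nabla_t a\ge\min_j a_j>0$, $|(\nabla_t a)^{-1}-a_i^{-1}|\le M''(1-t_i)$, giving $\mathbb{L}_\lambda(a)\to a_i$; the case $\lambda\to{\bf 0}$ is identical with $i$ replaced by $n+1$. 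Because each estimate depends only on $1-\lambda_i\to0$, the limits are independent of the path of approach, which is what legitimizes defining the vertex values by continuity and completes the verification that $\mathcal{L}_\lambda$ and $\mathbb{L}_\lambda$ are weighted multivariate means.
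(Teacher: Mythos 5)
Your proof is correct, but for two of the four conditions it takes a genuinely different route from the paper. For the mean property (i) and the symmetry (iii) you argue exactly as the paper does (averaging the pointwise bounds on $\sharp_t a$ and $\nabla_t a$ against the probability measure $\mu_\lambda$, and permutation invariance of $n!\,dt$ at $\lambda=e$). The divergence is in (ii) and (iv). For the derivative condition (iv), the paper never differentiates under the integral sign: it applies inequality \eqref{55} to $f(x)=e^x$ with $a_j=\delta_{ij}\ln a_i$ (resp.\ to the point $(1,\ldots,1,a_i,1,\ldots,1)$), obtaining $a_i^{\lambda_i}\le \mathcal{L}_\lambda(a)\le \lambda_i a_i+\sum_{j\neq i}\lambda_j$, and then traps the difference quotient $\bigl(\mathcal{L}_\lambda(a)-1\bigr)/(a_i-1)$ between $\bigl(a_i^{\lambda_i}-1\bigr)/(a_i-1)$ and $\lambda_i$, letting $a_i\to1$; the partial derivative $\lambda_i$ falls out by squeezing, with no measure-theoretic justification needed. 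Your route instead rests on the barycenter identity $\int_{E_n}t_i\,d\mu_\lambda(t)=\lambda_i$ together with a (correctly justified) differentiation under the integral sign; this is equally valid and has the merit of exhibiting one single mechanism behind all four conditions. For the vertex condition (ii), the paper simply declares it ``a direct consequence of \eqref{55}'' — i.e.\ the squeeze $\sharp_\lambda a\le\mathcal{L}_\lambda(a)\le\nabla_\lambda a$ and $!_\lambda a\le\mathbb{L}_\lambda(a)\le\nabla_\lambda a$, whose extreme sides both collapse to $a_i$ at $\lambda=e_i$ — leaving implicit the fact that $\mu_\lambda$ is undefined at the vertices and that the statement must be read as a limit. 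Your explicit $L^1$-concentration argument ($\int_{E_n}(1-t_i)\,d\mu_\lambda=1-\lambda_i\to0$ plus the Lipschitz-type bounds $|\sharp_t a-a_i|\le C(1-t_i)$ and $|(\nabla_t a)^{-1}-a_i^{-1}|\le M''(1-t_i)$) addresses exactly this gap and shows path-independence of the limit, so it is more careful than the paper on this point, at the cost of being longer; the paper's squeeze, where one accepts the continuity convention, delivers the same conclusion in one line and stays entirely inside the inequality framework already established by Theorem \eqref{55}.
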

\begin{proof}
We know that $a\longmapsto\sharp_t a$ is a mean for any fixed $t\in E_n$, that is
$$\min\left(a_1,a_2,...,a_{n+1}\right)\le \sharp_t a\le \max\left(a_1,a_2,...,a_{n+1}\right).$$
Multiplying all sides of this latter double inequality by $d\mu_\lambda(t)$ and then integrating with respect to $(t_1,t_2,..., t_n)\in E_n$, we get that $\mathcal{L}_\lambda(a)$ is mean for any $\lambda$. For condition (ii) of Definition \ref{DP}, it is a direct consequence of inequalities \eqref{55}. From \eqref{L1}, with Remark \ref{rem3},(iii), it is easy to see that
\begin{equation*}
\displaystyle\mathcal{L}_e(a)=n!\;\int_{E_n}\sharp_t a\,dt,
\end{equation*}
which with the fact that, 
$$(t_1,t_2,...,t_{n+1})\in E_{n}\Longleftrightarrow \big(t_{\sigma(1)},t_{\sigma(2)},...,t_{\sigma(n+1)}\big)\in E_{n}$$
for any permutation $\sigma$ of $\{1,2,...,n+1\}$, implies that $a\longmapsto\mathcal{L}_e(a)$ is symmetric.
Now, applying inequalities \eqref{55} for the convex function $f(x)=e^x$ with $a_j=\delta_{ij}\ln a_i$, $1\le i,j\le n+1$, where $\delta_{ij}$ refers to the Kronecker symbol, we get
$$a_i^{\lambda_i}\le \mathcal{L}_\lambda(a)\le \lambda_ia_i+\sum_{\underset{j\neq i}{j=1}}^{n+1}\lambda_j.$$
This, with the fact that $\mathcal{L}_\lambda(1,...,1)=1$, implies that
\begin{gather}\label{42.1}
\dfrac{a_i^{\lambda_i}-1}{a_i-1}\le \dfrac{\mathcal{L}_\lambda(a)-1}{a_i-1}\le \dfrac{\lambda_ia_i-1+\displaystyle\sum_{\underset{j\neq i}{j=1}}^{n+1}\lambda_j}{a_i-1}
\end{gather}
for any $a_i>1$, with reversed inequalities if $a_i<1$. 
Noticing that 
$$\lambda_ia_i-1+\displaystyle\sum_{\underset{j\neq i}{j=1}}^{n+1}\lambda_j=\lambda_i(a_i-1),$$ 
we deduce that 
$$\dfrac{\partial \mathcal{L}_\lambda}{\partial a_i}(1,1, \ldots, 1)=\lambda_i$$ 
for any $i=1,2, \ldots, n+1$. In summary, $\mathcal{L}_\lambda$ is a weighted mean.

Using similar arguments, we prove that $\mathbb{L}_\lambda$ is also a weighted mean.
\end{proof}

The following remark may be of interest.

\begin{remark}
$(i)$ Form \eqref{L1} and \eqref{L2}, it is not hard to see that $\mathcal{L}_\lambda$ and $\mathbb{L}_\lambda$ are homogenous maps of order one.\\
$(ii)$ For $n=1$ and $\lambda_1=\lambda_2=1/2$, \eqref{L1} and \eqref{L2} with simple manipulations yield $\mathcal{L}_{1/2}=\mathbb{L}_{1/2}=L$, where $L$ is the standard logarithmic mean of two variables. This justifies calling $\mathcal{L}_\lambda$ and $\mathbb{L}_\lambda$ weighted logarithmic means.\\
(iii) If $\lambda_i=\frac{1}{n+1}$, $i=1,2,...,n+1$, and $d\mu_{\lambda}(t)=n!\;dt$, \eqref{L1} and \eqref{L2} become, respectively,
$$L_1(a_1,a_2,...,a_{n+1}):=\mathcal{L}_e(a)=n!\;\int_{E_{n}}  \sharp_{t}a\, dt,$$
$$L_2(a_1,a_2,...,a_{n+1}):=\mathbb{L}_e(a)=\Big(n!\;\int_{E_{n}} \big(\nabla_t\;a\big)^{-1}\, dt\Big)^{-1},$$
which are the symmetric multivariate means introduced in the literature, see \cite{NEU} and \cite{RAI}.
\end{remark}

It is worth mentioning that the two weighted logarithmic means ${\mathcal L}_\lambda$ and $\mathbb{L}_\lambda$ are different. Moreover, for $n=1$ they are mutually different from the weighted logarithmic mean defined in \eqref{L}. The following example justifies these claims.
\begin{example}\label{example1}
Let us take $n=2$. We have the following values:
$$
\begin{array}{|c|c|c|c|c|}
		\hline
\lambda_1 &  \lambda_2 & a & \mathcal{L}_\lambda(a) & \mathbb{L}_\lambda(a)\\
\hline
 1/3 & 1/6 & \left(0.5,1,2\right) & 1.19393& 1.19612\\
\hline
0.2 & 0.25 & \left(1.3,1.5,1.9\right) & 1.66722 & 1.66599\\
\hline
\end{array}
$$
$$
\begin{array}{|c|c|c|c|c|}
\hline \lambda_1  &a & \mathcal{L}_\lambda(a) & \mathbb{L}_\lambda(a)& L_\lambda(a) \\
\hline 1 / 3 & (2,1) & 1.60804 & 1.62944 & 1.61423 \\
\hline 0.9 & (4,3)& 3.09329 & 3.08815 & 3.09162 \\
\hline
\end{array}
$$
\end{example}
Other inequalities concerning comparison between the previous weighted logarithmic means and some of the standard means are presented in the following result.

\begin{proposition}\label{Estimate1}
Let ${\bf\lambda}:=(\lambda_1,\lambda_2,...,\lambda_n)\in int(E_n)$ and $a:=(a_1,a_2,...,a_n,a_{n+1})\in (0,\infty)^{n+1}$. Then the following inequalities hold.
\begin{gather}\label{GlA}
\sharp_\lambda a\le {\mathcal L}_\lambda(a)\le \nabla_\lambda a.
\end{gather}
\begin{gather}\label{GLA}
!_\lambda a\le {\mathbb L}_\lambda(a)\le \nabla_\lambda a.
\end{gather}
\end{proposition}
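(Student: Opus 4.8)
The plan is to derive both chains directly from the weighted Hermite--Hadamard inequality \eqref{55}, applied to two judiciously chosen scalar convex functions; no new computation with the measure $\mu_\lambda$ is needed, since \eqref{55} already packages all the moment information.

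First I would prove \eqref{GlA}. Here the natural test function is $f(x)=e^x$, which is convex on ${\mathbb R}$. Apply \eqref{55} not to $a$ but to the shifted data $b:=(\ln a_1,\ln a_2,\ldots,\ln a_{n+1})$, which is legitimate because every $a_i>0$. Since $\sum_{i=1}^{n+1}t_i\ln a_i=\ln\big(\sharp_t a\big)$, the middle term of \eqref{55} becomes $\int_{E_n}e^{\sum_i t_i\ln a_i}\,d\mu_\lambda(t)=\int_{E_n}\sharp_t a\,d\mu_\lambda(t)=\mathcal{L}_\lambda(a)$ by \eqref{L1}. The left term is $f(\nabla_\lambda b)=\exp\big(\sum_i\lambda_i\ln a_i\big)=\sharp_\lambda a$, and the right term is $\nabla_\lambda f(b)=\sum_i\lambda_i e^{\ln a_i}=\nabla_\lambda a$. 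Thus \eqref{55} reads exactly $\sharp_\lambda a\le\mathcal{L}_\lambda(a)\le\nabla_\lambda a$, which is \eqref{GlA}.

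Next I would prove \eqref{GLA} by the same mechanism with the convex function $f(x)=1/x$ on $(0,\infty)$, now fed the original positive data $a$. The middle term of \eqref{55} is $\int_{E_n}\big(\nabla_t a\big)^{-1}d\mu_\lambda(t)$, which by \eqref{L2} equals $\big({\mathbb L}_\lambda(a)\big)^{-1}$; the left term is $f(\nabla_\lambda a)=\big(\nabla_\lambda a\big)^{-1}$ and the right term is $\nabla_\lambda f(a)=\sum_i\lambda_i a_i^{-1}=\big(!_\lambda a\big)^{-1}$. Hence \eqref{55} yields $\big(\nabla_\lambda a\big)^{-1}\le\big({\mathbb L}_\lambda(a)\big)^{-1}\le\big(!_\lambda a\big)^{-1}$; taking reciprocals of these three positive quantities reverses the order and produces $!_\lambda a\le{\mathbb L}_\lambda(a)\le\nabla_\lambda a$, which is \eqref{GLA}.

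There is essentially no hard analytic obstacle here: once one recognizes that $\sharp_t a$ and $(\nabla_t a)^{-1}$ are, respectively, the $f\circ(\sum_i t_i\,\cdot\,)$ integrands for $f=\exp$ and $f=1/x$, the statement is just \eqref{55} read off for these two functions. The only points requiring a little care are bookkeeping ones: ensuring the substitution $b_i=\ln a_i$ is admissible (guaranteed by $a_i>0$), verifying that $\mu_\lambda$ being a \emph{probability} measure is what makes the left and right endpoints collapse to $\sharp_\lambda a$, $\nabla_\lambda a$, $!_\lambda a$ rather than to scaled versions, and correctly reversing the inequalities when passing to reciprocals in the second case. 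I would also remark that the argument is insensitive to the concavity alternative in \eqref{55}, since $\exp$ and $x\mapsto 1/x$ are genuinely convex on the relevant domains.
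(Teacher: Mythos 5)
Your proof is correct and follows essentially the same route as the paper: the authors likewise obtain \eqref{GlA} by applying \eqref{55} with $f(x)=e^x$ to the data $\log(a)$ (recognizing $\mathcal{L}_\lambda(a)=\int_{E_n}\exp\big(\nabla_t\log(a)\big)d\mu_\lambda(t)$), and \eqref{GLA} by applying \eqref{55} with $f(x)=x^{-1}$ and taking reciprocals. Your write-up merely makes explicit the bookkeeping (the substitution $b_i=\ln a_i$ and the order reversal under reciprocals) that the paper leaves implicit.
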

\begin{proof}
Noticing that 
$$\mathcal{L}_\lambda(a)=\int_{E_{n}}\exp\big({\nabla_t \log(a)}\big)d\mu_\lambda(t),$$ 
and applying \eqref{55} with the convex function $x\mapsto e^x$ on $(0,\infty)$, we get \eqref{GlA}.

By applying \eqref{55} for the convex function $x\mapsto x^{-1}$ on $(0,\infty)$, we get \eqref{GLA}.
\end{proof}

\begin{proposition}\label{Estimate2}
For any ${\bf\lambda}:=(\lambda_1,\lambda_2,...,\lambda_n)\in int(E_n)$ and  $a:=(a_1,a_2,...,a_n,a_{n+1})\in (0,\infty)^{n+1}$, there hold
\begin{gather}\label{lL}
!_\lambda a\le {\mathbb L}_\lambda^{-1} (a^{-1})\le {\mathcal L}_\lambda (a),
\end{gather}
where we set $a^{-1}:=(a_1^{-1},..., a_{n+1}^{-1})$.
\end{proposition}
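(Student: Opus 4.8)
The plan is to first rewrite the middle quantity ${\mathbb L}_\lambda^{-1}(a^{-1})$ in a transparent form, and then bound it from both sides using, respectively, a pointwise mean inequality and the left-hand side of the weighted Hermite--Hadamard inequality \eqref{55}. Writing $a^{-1}:=(a_1^{-1},\ldots,a_{n+1}^{-1})$ and recalling \eqref{L2}, I would begin by observing that
$$\nabla_t\,a^{-1}=\sum_{i=1}^{n+1}t_ia_i^{-1},\qquad\big(\nabla_t\,a^{-1}\big)^{-1}=\Big(\sum_{i=1}^{n+1}t_ia_i^{-1}\Big)^{-1}=\;!_t\,a,$$
so that \eqref{L2} gives directly
$${\mathbb L}_\lambda^{-1}(a^{-1})=\int_{E_n}\;!_t\,a\;d\mu_\lambda(t).$$
In other words, the middle term of \eqref{lL} is exactly the $\mu_\lambda$-average over $E_n$ of the weighted harmonic mean $!_t\,a$. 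This reformulation is the one genuinely interpretive step, and once it is in place the rest is routine.

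For the upper bound in \eqref{lL}, I would use the elementary fact that for each fixed $t\in E_n$ the weights $t_1,\ldots,t_{n+1}$ are nonnegative with $\sum_i t_i=1$, so the weighted harmonic--geometric mean inequality yields the pointwise estimate $!_t\,a\le\sharp_t\,a$. Since $\mu_\lambda$ is a positive (probability) measure on $E_n$, integrating this pointwise inequality and comparing with the definition \eqref{L1} gives
$${\mathbb L}_\lambda^{-1}(a^{-1})=\int_{E_n}\;!_t\,a\;d\mu_\lambda(t)\le\int_{E_n}\sharp_t\,a\;d\mu_\lambda(t)={\mathcal L}_\lambda(a).$$

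For the lower bound I would apply the left-hand inequality of \eqref{55} to the convex function $f(x)=x^{-1}$ on $(0,\infty)$, with the vector $a^{-1}$ in place of $a$ (legitimate since all $a_i>0$). As $f(\nabla_\lambda a^{-1})=\big(\sum_i\lambda_ia_i^{-1}\big)^{-1}=\;!_\lambda\,a$ and $f\big(\sum_i t_ia_i^{-1}\big)=\;!_t\,a$, the bound $f(\nabla_\lambda a^{-1})\le\int_{E_n}f\big(\nabla_t a^{-1}\big)d\mu_\lambda(t)$ reads precisely $!_\lambda\,a\le\int_{E_n}!_t\,a\,d\mu_\lambda(t)={\mathbb L}_\lambda^{-1}(a^{-1})$, which is the desired left inequality. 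Combining the two displays completes the proof.

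I do not expect a serious obstacle here: the only substantive move is the opening identity expressing ${\mathbb L}_\lambda^{-1}(a^{-1})$ as the integral of the weighted harmonic mean. After that, the upper bound is a pointwise HM--GM comparison integrated against the probability measure $\mu_\lambda$, and the lower bound is a direct instance of the already established weighted Hermite--Hadamard inequality \eqref{55} for the convex map $x\mapsto x^{-1}$.
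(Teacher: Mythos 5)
Your proof is correct and follows essentially the same route as the paper: both rest on the identity ${\mathbb L}_\lambda^{-1}(a^{-1})=\int_{E_n}!_t\,a\;d\mu_\lambda(t)$, obtain the upper bound by integrating the pointwise inequality $!_t\,a\le\sharp_t\,a$ against $\mu_\lambda$, and obtain the lower bound from \eqref{55} with the convex function $x\mapsto x^{-1}$ applied to $a^{-1}$ (the paper phrases this last step as an appeal to the already-proved \eqref{GLA} for the vector $a^{-1}$, which is the same inequality). Your write-up is in fact slightly cleaner, since you make explicit the opening identity that the paper uses only implicitly, and you use the correct measure $\mu_\lambda$ where the paper's proof has a typo ($\nu_\lambda$).
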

\begin{proof}
We know that, $!_{t}\;a\le \sharp_t\;a\le \nabla_t\;a$,
with $t=(t_1,...t_n)$ and $t_{n+1}=1-\sum_{i=1}^n t_i$.
Multiplying all sides of these inequalities by $d\nu_\lambda(t)$ and then integrating with respect to $(t_1,t_2,...,t_{n})\in E_n$, we obtain
$$\mathbb {L}^{-1}_\lambda(a^{-1})\leq \mathcal{L}_\lambda(a)\leq \nabla_\lambda a.$$
Now, using \eqref{GLA} we obtain
$$\mathbb{L}^{-1}_\lambda(a^{-1})\ge (\nabla_\lambda a^{-1})^{-1}=!_\lambda a,$$
which completes the proof.
\end{proof}

\begin{remark}
According to the values provided in Example \ref{example1}, we have 
$$\mathcal{L}_{(1/3,1/6)}(0.5,1,2)<\mathbb{L}_{(1/3,1/6)}(0.5,1,2),$$
$$\mathcal{L}_{(0.2,0.25)}(1.3,1.5,1.9)>\mathbb{L}_{(0.2,0.25)}(1.3,1.5,1.9).$$
Therefore, $\mathcal{L}_\lambda$ and $\mathbb{L}_\lambda$ are not comparable.
\end{remark}

\subsection{\bf Weighted multivariate identric mean.}

In this subsection, we will present another multivariate mean as well as some of its properties.

\begin{proposition}
Let ${\bf\lambda}:=(\lambda_1,\lambda_2,...,\lambda_n)\in int(E_n)$ and $a:=(a_1,a_2,...,a_n,a_{n+1})\in (0,\infty)^{n+1}$, we set
\begin{equation}\label{I1}
\mathcal{I}_\lambda(a):=\exp\Big(\int_{E_{n}}  \log\left(\nabla_ta\right)\, d \mu_\lambda(t)\Big),
\end{equation}
with $t_{n+1}=1-\sum_{i=1}^{n}t_i$ and $\mu_\lambda$ is the measure defined in \eqref{mu}. Then
${\mathcal I}_\lambda$ is a weighted mean, which we call the multivariate weighted identric mean.
\end{proposition}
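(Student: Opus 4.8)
The plan is to verify, one after another, the four requirements of Definition \ref{DP} for the map $\mathcal{I}_\lambda$, following closely the scheme already used for the weighted logarithmic means $\mathcal{L}_\lambda$ and $\mathbb{L}_\lambda$. The two tools that drive every step are that $\mu_\lambda$ is a probability measure on $E_n$ and that the double inequality \eqref{55}, read for the \emph{concave} function $\log$ (so with reversed inequalities), controls the quantity $\log\mathcal{I}_\lambda(a)=\int_{E_n}\log(\nabla_t a)\,d\mu_\lambda(t)$ from both sides.

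First I would dispose of the mean property, requirement (i). Since $t\mapsto\nabla_t a=\sum_{i=1}^{n+1}t_ia_i$ is itself an arithmetic mean, one has $\min_i a_i\le\nabla_t a\le\max_i a_i$ for every $t\in E_n$; applying the increasing map $\log$, integrating against the probability measure $\mu_\lambda$, and then applying the increasing map $\exp$, yields $\min_i a_i\le\mathcal{I}_\lambda(a)\le\max_i a_i$, which is \eqref{mean}. For requirement (ii) I would invoke \eqref{55} with $f=\log$: because $\log$ is concave the inequalities reverse to
\[
\nabla_\lambda\log(a)\le\int_{E_n}\log(\nabla_t a)\,d\mu_\lambda(t)\le\log\big(\nabla_\lambda a\big).
\]
Evaluating the two outer terms at $\lambda=e_i$ (both collapse to $\log a_i$) and at $\lambda={\bf 0}$ (both collapse to $\log a_{n+1}$) squeezes the middle integral to the corresponding value, and exponentiating gives $\mathcal{I}_{e_i}(a)=a_i$ and $\mathcal{I}_{\bf 0}(a)=a_{n+1}$.

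Next comes the symmetry requirement (iii). By Remark \ref{rem3},(iii), the choice $\lambda=e$ forces $d\mu_e(t)=n!\,dt$, so that $\mathcal{I}_e(a)=\exp\big(n!\int_{E_n}\log(\sum_{i=1}^{n+1}t_ia_i)\,dt\big)$. Since $(t_1,\dots,t_{n+1})\in E_n$ if and only if $(t_{\sigma(1)},\dots,t_{\sigma(n+1)})\in E_n$ for every permutation $\sigma$ of $\{1,\dots,n+1\}$, and since such a relabeling preserves both Lebesgue measure and the sum $\sum_i t_ia_i$, simultaneously permuting the $a_i$ and the $t_i$ leaves the integral unchanged; hence $\mathcal{I}_e$ is symmetric in $a_1,\dots,a_{n+1}$.

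The real work is requirement (iv), the computation of the first partial derivatives at $(1,\dots,1)$, and I expect this to be the main obstacle. The idea is to exploit the global estimate $\sharp_\lambda a\le\mathcal{I}_\lambda(a)\le\nabla_\lambda a$ (obtained by exponentiating the displayed double inequality above) along the $i$-th coordinate axis: setting $a_j=1$ for $j\neq i$ turns these bounds into $a_i^{\lambda_i}\le\mathcal{I}_\lambda(1,\dots,a_i,\dots,1)\le\lambda_i a_i+(1-\lambda_i)$. Using $\mathcal{I}_\lambda(1,\dots,1)=1$, I would then form the difference quotients
\[
\frac{a_i^{\lambda_i}-1}{a_i-1}\le\frac{\mathcal{I}_\lambda(1,\dots,a_i,\dots,1)-1}{a_i-1}\le\frac{\lambda_i a_i+(1-\lambda_i)-1}{a_i-1}=\lambda_i
\]
for $a_i>1$ (with reversed inequalities for $a_i<1$), and let $a_i\to 1$: the left-hand side tends to $\lambda_i$, the right-hand side equals $\lambda_i$, so the squeeze forces $\partial\mathcal{I}_\lambda/\partial a_i(1,\dots,1)=\lambda_i$ for each $i=1,\dots,n+1$. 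The delicate points to watch are the legitimacy of applying \eqref{55} up to the boundary values $\lambda=e_i,{\bf 0}$ in step (ii) --- best handled as a limit of interior $\lambda$, both bounds being continuous and coinciding there --- and treating the one-sided difference quotients consistently in the case $a_i<1$.
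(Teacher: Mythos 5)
Your proposal is correct and follows essentially the same route as the paper: the crucial requirement (iv) is established by exactly the paper's squeeze argument, specializing \eqref{55} (equivalently \eqref{GIA}) to $a=(1,\dots,1,a_i,1,\dots,1)$, forming the difference quotients $\frac{a_i^{\lambda_i}-1}{a_i-1}\le\frac{\mathcal{I}_\lambda(a)-1}{a_i-1}\le\lambda_i$, and letting $a_i\to1$. The only difference is that you spell out requirements (i)--(iii) (including the boundary cases $\lambda=e_i,{\bf 0}$ via limits of interior $\lambda$), which the paper dismisses as straightforward; your treatment there mirrors the arguments the paper uses for $\mathcal{L}_\lambda$, so nothing is genuinely new or missing.
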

\begin{proof}
Conditions $(i), (ii)$ and $(iii)$ of Definition \ref{DP} are straightforward and therefore omitted here. To check the condition $(iv)$, we apply \eqref{55} with the concave function $f(x)=\log x,\; x\in(0,\infty)$ and $a=(1,..,1,a_i,1,...,1)$. We get,
$${\lambda_i}\log(a_i)\le \int_{E_{n}}  \log\left(\nabla_ta\right)\, d \mu_\lambda(t)\le \log\Big(\lambda_ia_i+\sum_{\underset{j\neq i}{j=1}}^{n+1}\lambda_j\Big),$$
and then
$$a_i^{\lambda_i}\le\mathcal{I}_\lambda(a)\le \lambda_ia_i+\sum_{\underset{j\neq i}{j=1}}^{n+1}\lambda_j.$$
This, with the fact that $\mathcal{I}_\lambda(1,...,1)=1$, implies that for any $a_i>1$ we have
\begin{equation}\label{42.1}
\dfrac{a_i^{\lambda_i}-1}{a_i-1}\le \dfrac{\mathcal{I}_\lambda(a)-1}{a_i-1}\le \frac{\lambda_ia_i-1+\displaystyle\sum_{\underset{j\neq i}{j=1}}^{n+1}\lambda_j}{a_i-1},
\end{equation}
with reversed inequalities if $a_i<1$. 
Noticing that 
$$\lambda_ia_i-1+\displaystyle\sum_{\underset{j\neq i}{j=1}}^{n+1}\lambda_j=\lambda_i(a_i-1),$$ 
we find
$$\dfrac{\partial \mathcal{I}_\lambda}{\partial a_i}(1,1, \ldots, 1)=\lambda_i,\, \text{ for every } i=1,2, \ldots, n+1.$$
So, $\mathcal{I}_\lambda$ is a weighted mean.
\end{proof}

The following result provides a comparison between ${\mathcal I}_\lambda$ and some of the standard weighted means. It also states that the two multivariate weighted means $\mathbb{L}_\lambda$ and ${\mathcal I}_\lambda$ are comparable.

\begin{proposition}\label{Estimate}
Let ${\bf\lambda}:=(\lambda_1,\lambda_2,...,\lambda_n)\in int(E_n)$ and $a:=(a_1,a_2,...,a_n,a_{n+1})\in (0,\infty)^{n+1}$. We have the following inequalities
\begin{gather}\label{GIA}
\sharp_\lambda a\le {\mathcal I}_\lambda(a)\le \nabla_\lambda a,
\end{gather}
\begin{gather}\label{LI}
\mathbb{L}_\lambda(a)\le {\mathcal I}_\lambda(a).
\end{gather}
\end{proposition}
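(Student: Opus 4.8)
The plan is to prove the two inequalities \eqref{GIA} and \eqref{LI} separately, relying in both cases on applying the master inequality \eqref{55} to a judiciously chosen convex or concave function, exactly as was done for the logarithmic means. For \eqref{GIA}, I would apply \eqref{55} to the concave function $x\mapsto\log x$ on $(0,\infty)$. The right-hand inequality of \eqref{55}, when reversed for the concave $\log$, reads
$$\int_{E_n}\log\big(\nabla_t a\big)\,d\mu_\lambda(t)\ge\nabla_\lambda\big(\log a\big)=\sum_{i=1}^{n+1}\lambda_i\log a_i=\log\big(\sharp_\lambda a\big),$$
and exponentiating yields the lower bound $\sharp_\lambda a\le\mathcal{I}_\lambda(a)$. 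For the upper bound I would first apply Jensen's integral inequality (the concavity of $\log$) to pull the $\log$ outside the integral,
$$\int_{E_n}\log\big(\nabla_t a\big)\,d\mu_\lambda(t)\le\log\Big(\int_{E_n}\nabla_t a\,d\mu_\lambda(t)\Big),$$
and then observe that the extreme sides of \eqref{55} applied to $f(x)=x$ give $\int_{E_n}\nabla_t a\,d\mu_\lambda(t)=\nabla_\lambda a$; exponentiating produces $\mathcal{I}_\lambda(a)\le\nabla_\lambda a$.

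For the comparison \eqref{LI}, the natural route is the pointwise weighted AM--GM inequality together with monotonicity of the integral. For each fixed $t\in E_n$ one has, applying concavity of $\log$ to the weights $t_i$,
$$\log\big(\nabla_t a\big)\ge\sum_{i=1}^{n+1}t_i\log a_i=\log\big(\sharp_t a\big),$$
equivalently $\nabla_t a\ge\sharp_t a$; but this alone compares $\mathcal{I}_\lambda$ with $\mathcal{L}_\lambda$, not with $\mathbb{L}_\lambda$. To reach $\mathbb{L}_\lambda$ I would instead exploit concavity of $\log$ in the form of Jensen applied to the measure $\mu_\lambda$ acting on the convex functional $t\mapsto(\nabla_t a)^{-1}$. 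Concretely, since $x\mapsto-\log x$ is convex, Jensen gives
$$\int_{E_n}\log\big(\nabla_t a\big)\,d\mu_\lambda(t)=-\int_{E_n}\log\big((\nabla_t a)^{-1}\big)\,d\mu_\lambda(t)\ge-\log\Big(\int_{E_n}(\nabla_t a)^{-1}\,d\mu_\lambda(t)\Big),$$
and the right-hand side is precisely $\log\mathbb{L}_\lambda(a)$ by the definition \eqref{L2}. Exponentiating gives $\mathcal{I}_\lambda(a)\ge\mathbb{L}_\lambda(a)$, which is \eqref{LI}.

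The main obstacle is keeping the direction of every inequality straight, since $\log$ is concave while $x\mapsto(\nabla_t a)^{-1}$ is convex, and the two Jensen applications push in opposite-looking directions; the key is to recognize that both \eqref{GIA} and \eqref{LI} ultimately rest on the single fact that $\log$ is concave, used once against the base measure $\mu_\lambda$ (for \eqref{LI} and the upper half of \eqref{GIA}) and once against the discrete weights $\lambda_i$ (for the lower half of \eqref{GIA}). There is no genuine computational difficulty here because the measure $\mu_\lambda$ is a probability measure by construction, so Jensen applies verbatim and the reduction to \eqref{55} and \eqref{L2} is immediate; the only care needed is the bookkeeping of which mean corresponds to which side after exponentiation.
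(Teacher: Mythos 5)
Your proof is correct and follows essentially the same route as the paper: for \eqref{GIA} the paper applies the reversed \eqref{55} to the concave function $x\mapsto\log x$ and exponentiates, and for \eqref{LI} it applies the integral Jensen inequality with the convex function $x\mapsto-\log x$ to $t\mapsto(\nabla_t a)^{-1}$, exactly as you do. Your only deviation is cosmetic: for the upper bound in \eqref{GIA} you unwind the left inequality of \eqref{55} into Jensen plus the identity $\int_{E_n}\nabla_t a\,d\mu_\lambda(t)=\nabla_\lambda a$, whereas the paper invokes that inequality directly.
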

\begin{proof}
Applying \eqref{55} for the concave function $x\mapsto \log x$ on $(0,\infty)$, we get
$$\nabla_\lambda \log(a)\le \int_{E_n}\log(\nabla_ta)d\mu_\lambda(t)\le \log(\nabla_\lambda a), $$
which, with the fact that $x\mapsto e^x$ is increasing on $\mathbb{R}$, leads to \eqref{GIA}.

On the other hand, by the use of Jensen integral inequality with the convex function $x\mapsto -\log x$ on $(0,\infty)$, we obtain
$$-\log\int_{E_n}\big(\nabla_t a\big)^{-1}d\nu_\lambda(t)\leq\int_{E_n}\log\Big(\nabla_t a\Big)d\nu_\lambda(t),$$
which, with the fact that $x\mapsto e^x$ is increasing on $\mathbb{R}$, implies \eqref{LI}.
\end{proof}

Finally, we state the following remark.
 
\begin{remark}
(i) For $n=1$, $\mathcal{I}_{1/2}$ coincides with the standard bivariate identric mean $I$.\\
(ii) For $n=1$, ${\mathcal I}_\lambda$ and $I_{\lambda}$ defined by \eqref{I} are not comparable as shown in the following example. 
$$\begin{array}{|c|c|c|c|}
\hline \lambda_1  &a & \mathcal{I}_\lambda(a)& I_\lambda(a) \\
\hline 3/4 & (3,1) & 1.40952 & 1.43367  \\
\hline 0.2 & (6.5,6)& 6.39950 & 6.39893\\
\hline
\end{array}$$
(iii) The weighted means ${\mathcal L}_\lambda$ and ${\mathcal I}_\lambda$ are not comparable. Indeed, taking $n = 2$, we get
$$\begin{array}{|c|c|c|c|c|}
\hline \lambda_1  &\lambda_2 & a& \mathcal{L}_\lambda(a)&\mathcal{I}_\lambda(a)\\
\hline 1/3 & 1/6 & (0.5,1,2) & 1.19393&1.26771  \\
\hline 0.05 & 0.2& (19,1,1) &1.36040 &1.35253\\
\hline
\end{array}$$
So, 
$$\mathcal{L}_{(\frac{1}{3},\frac{1}{6})}(0.5,1,2)\le \mathcal{I}_{(\frac{1}{3},\frac{1}{6})}(0.5,1,2)\text{ and }  \mathcal{L}_{(0.05,0.2)}(19,1,1)\ge \mathcal{I}_{(0.05,0.25)}(19,1,1),$$
\end{remark}

\bibliographystyle{amsplain}

\end{document}